\newtheorem{lemma}{Lemma}[section]
\newtheorem{theorem}[lemma]{Theorem}
\newtheorem{fact}[lemma]{Fact}
\newtheorem{corollary}[lemma]{Corollary}
\newtheorem{conj}[lemma]{Conjecture}
\theoremstyle{remark}
\newtheorem{Bemerkung}[lemma]{Remark} %remark
\newtheorem{Beispiel}[lemma]{Example} %example
\newenvironment{remark}{\begin{Bemerkung}}{\qede\end{Bemerkung}}
\newenvironment{ex}{\begin{Beispiel}}{\qede\end{Beispiel}}
\renewcommand{\H}{\mathcal{H}}
\newcommand{\Fix}{\operatorname{Fix}}
\newcommand{\cl}{\operatorname{cl}}
\newcommand{\conv}{\operatorname{conv}}
\newcommand\argmin{\mathop{\rm argmin}}
\newcommand{\qede}{\hspace*{\fill}$\Diamond$\medskip}
\begin{document}

\title{Norm Convergence of Realistic Projection and
       Reflection Methods}

\author{Jonathan M. Borwein$^{a}$ \quad Brailey Sims$^{a}$ \quad Matthew K. Tam$^{a}$
        \thanks{Corresponding author. Email: \href{mailto:matthew.tam@uon.edu.au}{matthew.tam@uon.edu.au}}\\ \vspace{6pt}
        $^{a}${\em CARMA Center, University of Newcastle, Callaghan, NSW 2308, Australia} }

\maketitle

\begin{abstract}
 We provide sufficient conditions for norm convergence of various projection and reflection methods, as well as giving limiting examples regarding convergence rates.
\end{abstract}

\paragraph{Keywords:}
 projection; reflection; alternating projection method; Douglas--Rachford method; norm convergence; Hilbert lattice

\paragraph{AMS Subject Classification:}
47H09; 47H10; 90C25

\section{Introduction}
The \emph{(2-set) convex feasibility problem} asks for a point contained within the intersection of two closed convex sets of a Hilbert space. \emph{Projection and reflection methods} represent a class of algorithmic schemes which are commonly used to solve this problem. Some notable projection and reflection methods include the \emph{method of alternating projections}, the \emph{Douglas--Rachford method}, the \emph{cyclic Douglas--Rachford scheme}, and of course many extensions and variants. For details see \cite{ER11,BaB93,BaB96,LM67,BaCL04,cycDR,cycDRinfeas}, and the references therein. Each iteration of these methods, employes some combination of \emph{(nearest point) projections} onto the constraint sets. Their sustained popularity, even in settings without convexity, is due to their relative simplicity and ease-of-implementation, in addition to observed good performance \cite{ERT07,GE08,DRcomb,DRmatrix}.

For the majority of projection and reflection methods applied to general closed convex sets only weak convergence of the iterates can be guaranteed. . Hundal, relatively recently \cite{HH04}, gave the first explicit example of an alternating projection iteration which does not converge in norm. A number of variants and extensions to this example have since been published \cite{MR03,K08,BBDHV05}, some of which cover the case of non-intersecting sets (infeasible  problems). These examples consider two sets, the first being either a closed subspace of finite codimension or one of its half-spaces, and the second a convex cone ``built-up" from three dimensional ``building blocks". For non-convex sets, the question of convergence is more difficult, and currently result focus on the finite dimensional setting \cite{LLM09,BLPW13,HL13,HLN13,BaN14}.

 In light of these examples, it is natural ask what compatibility conditions on the two sets are required to ensure norm convergence. This is further motivated by the pleasing physical interpretation of norm convergence as the ``error" becoming arbitrarily small \cite{BaC01}.

When the constraint sets satisfies certain regularity properties, norm convergence of the method of alternating projections can be guaranteed \cite{BaB93,BaB96}, and in some cases a linear rate of converge can also be assured. These regularity conditions are most easily invoked in the analysis of the method of alternating projections. This is because each iteration of the method produces a point contained within one of the two constraint sets for which the regularity properties can be invoked. On the other hand, the Douglas--Rachford method generates points that need not lie within the sets, making it more difficult to analyze. Consequently less is known of its behaviour.  Further, to the authors' knowledge no explicit Hundal-like counter-example is known for the Douglas--Rachford algorithm. For recent progress, on convex Douglas--Rachford methods see \cite{HLN13,BaBPW13}.

An important practicable instance of the feasibility problem occurs when the space is a Hilbert lattice, one of the sets  is the positive Hilbert cone, and the other is a closed affine subspace with finite codimension. Problems of this kind arise, for example, in the so called `moment problem' (see \cite{BaB93}). Applied to this type of feasibility problem, Bauschke and Borwein proved that the method of alternating projection converges in norm whenever the affine subspace has codimension one \cite{BaB93}. The same was conjectured to stay true for any finite codimension, but remains a stubbornly open problem.

\begin{quote}
 The goal of this paper is two-fold. First, to formulate unified sufficient conditions for norm convergence of fundamental projection and reflection methods when applied to feasibility problems with finite codimensional affine space and convex cone constraints, and second, to give examples and counter-examples regarding the convergence rate of these methods.
\end{quote}

The remainder of the paper is organized as followed: in Section~\ref{sec:preliminaries} we recall definitions and important theory for our analysis; in Section~\ref{sec:sufficient} we formulate sufficient conditions for norm convergence, which we then specialize to Hilbert cones. Finally, in Section~\ref{sec:rates} we give various examples and counter-examples regarding the rate of converge, and the interplay with regularity of the constraints sets, for both projection and reflection methods.

\section{Preliminaries}\label{sec:preliminaries}
Throughout, we assume that $\mathcal H$ is a real Hilbert space equipped with inner product $\langle\cdot,\cdot\rangle$ and induced norm $\|\cdot\|$. We denote the \emph{range} (resp. \emph{nullspace}) of the a mapping $T$ by $R(T)$ (resp. $N(T)$).

The \emph{(nearest point) projection} onto a set $S\subseteq\mathcal H$ is the mapping $P_S:\mathcal H\to S$ given by
 $$P_Sx:=\argmin_{s\in S}\|x-s\|.$$
If $S$ is closed and convex, then $P_S$ is well defined and has the characterization
 \begin{equation}
  \langle x-P_Sx,S-P_Sx\rangle\leq 0.\label{eq:varChar}
 \end{equation}
The \emph{reflection} with respect to $S$ is the mapping $R_S:\mathcal H\to \mathcal H$ defined by $R_S:=2P_S-I$.

Recall that a \emph{cone} is a set $K\subseteq\mathcal H$ such that $\mathbb R_+K\subseteq K$. A cone $K$ is \emph{pointed} if $K\cap(-K)=\{0\}$, \emph{generating} if $K-K=\mathcal H$,
and \emph{(norm) normal} if there exist a (norm) neighbourhood basis, $\mathcal V$, of $0$ such that
 $$V=(V+K)\cap (V-K) \text{ for all }V\in\mathcal V.$$
Given a set $S\subseteq\mathcal H$, its \emph{negative polar cone} is the convex cone
  $$S^\ominus:=\{x\in\mathcal H:\langle x,S\rangle\leq 0\}.$$
If $S$ is nonempty, $(S^\ominus)^\ominus=  \cl\conv(\mathbb R_+S)$ (see, for example, \cite{BV10}). In particular, if $K$ is a closed convex cone then $(K^\ominus)^\ominus=K$. The \emph{positive polar cone} to $S$ is defined similarly and $S^\oplus:=-S^\ominus$.

We have the following useful conic duality results.
\begin{fact}
 Let $X$ be a Banach space, and $K\subseteq X$ be a closed convex cone. Then:
  \begin{enumerate}[(a)]
   \item $K$ is pointed if and only if $K^\ominus-K^\ominus$ is weak-star dense in $X^*$.
   \item $K^\ominus$ is pointed if and only if $K-K$ is weakly dense in $X$.
   \item $K^\ominus$ is normal if and only if $K$ is generating.
  \end{enumerate}
\end{fact}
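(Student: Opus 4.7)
The three parts are all duality statements; (a) and (b) follow from Hahn--Banach combined with the bipolar theorem, while (c) is Krein's classical equivalence between normality of the dual cone and generation of the primal cone. I expect the reverse implication in (c) to be the main obstacle.

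For (a) and (b), I would exploit the fact that both $K^\ominus - K^\ominus$ and $K - K$ are linear subspaces (the real linear spans of the cones $K^\ominus$ and $K$ respectively). A subspace of $X^*$ is weak-star dense iff its preannihilator in $X$ is $\{0\}$, and a subspace of $X$ is weakly (equivalently, norm) dense iff its annihilator in $X^*$ is $\{0\}$. The preannihilator of $K^\ominus - K^\ominus$ unpacks as
\[
\{x \in X : \langle x, f\rangle = 0 \text{ for every } f \in K^\ominus\} = (K^\ominus)^\ominus \cap -(K^\ominus)^\ominus,
\]
and the bipolar identity $(K^\ominus)^\ominus = K$ (for closed convex cones) identifies this with $K \cap (-K)$; this is $\{0\}$ iff $K$ is pointed, proving (a). The annihilator of $K - K$ unpacks identically to $K^\ominus \cap (-K^\ominus)$, which is $\{0\}$ iff $K^\ominus$ is pointed, proving (b).

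For (c), the forward direction ($K$ generating $\Rightarrow K^\ominus$ normal) I would derive from the open mapping theorem: the continuous surjection $K \times K \to X$, $(u, v) \mapsto u - v$, from the complete metric space $K \times K$ onto the Banach space $X$ is open, yielding $M > 0$ such that every $x \in X$ admits a decomposition $x = u - v$ with $u, v \in K$ and $\max(\|u\|, \|v\|) \leq M\|x\|$. I would then invoke the semimonotonicity characterization of normality: if $f, g \in K^\ominus$ with $g - f \in K^\ominus$, then $\langle u, g\rangle \leq \langle u, f\rangle \leq 0$ for $u \in K$, so for $\|x\| \leq 1$,
\[
|\langle x, f\rangle| \leq |\langle u, f\rangle| + |\langle v, f\rangle| \leq |\langle u, g\rangle| + |\langle v, g\rangle| \leq 2M\|g\|,
\]
which gives $\|f\| \leq 2M\|g\|$ and hence normality of $K^\ominus$.

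The reverse direction of (c) ($K^\ominus$ normal $\Rightarrow K$ generating) is the main obstacle, being the harder half of Krein's theorem. My plan is to show that the quantitative semimonotonicity constant of $K^\ominus$, combined with a Hahn--Banach separation argument, forces the Minkowski functional of the closed symmetric convex set $\overline{(K \cap B_X) - (K \cap B_X)}$ to be dominated by a multiple of $\|\cdot\|$ on all of $X$; a Baire-category / open-mapping bootstrap then removes the closure and yields $K - K = X$. If this direct approach becomes technically cumbersome, I would instead appeal to the classical treatment in the literature on ordered Banach spaces.
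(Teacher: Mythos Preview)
The paper itself does not prove this fact; it simply cites Aliprantis--Tourky \cite[Th.~2.13 \& Th.~2.40]{AT07}. Your proposal therefore goes well beyond what the paper does, supplying an actual argument where the authors only give a reference.

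Your treatment of (a) and (b) via annihilators/preannihilators and the bipolar theorem is correct and is essentially the standard route. For the forward implication in (c), the conclusion you want (a bounded decomposition: every $x$ with $\|x\|\le 1$ splits as $u-v$ with $u,v\in K$ and $\|u\|,\|v\|\le M$) is indeed the key, but your justification is phrased a bit loosely: the open mapping theorem does not apply directly to the map $K\times K\to X$, since $K\times K$ is not a linear space. The correct argument is the usual Baire-category version---write $X=\bigcup_n n\bigl((K\cap B_X)-(K\cap B_X)\bigr)$, deduce that the symmetric convex set $(K\cap B_X)-(K\cap B_X)$ has nonempty interior, and then remove the closure by a standard absorption argument. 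Once the bounded decomposition is in hand, your semimonotonicity computation for normality of $K^\ominus$ is fine. For the reverse implication in (c) you correctly identify Krein's theorem as the substantive ingredient; since the paper itself simply defers to the literature here, your fallback to the classical treatment is entirely in keeping with the authors' own approach.
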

\begin{proof}
 See, for example, \cite[Th.~2.13 \& Th.~2.40]{AT07}.
\end{proof}

A Hilbert space can be expressed as the direct sum of any closed subspace and its orthogonal complement. The following theorem is a fine analogue for closed convex cones.

\begin{theorem}[Moreau decomposition theorem] \label{th:moreau}
 Suppose $K\subseteq\mathcal H$ is a nonempty closed convex cone.
 For any $x\in\mathcal H$,
  \begin{enumerate}[(a)]
   \item $x=P_Kx+P_{K^\ominus}x$.
   \item $\langle P_Kx,P_{K^\ominus}x\rangle=0$.
   \item $\|x\|^2=d_K^2(x)+d_{K^\ominus}^2(x).$
  \end{enumerate}
\end{theorem}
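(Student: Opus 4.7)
The plan is to set $p := P_K x$ and $q := x - p$, and show that $q = P_{K^\ominus} x$ together with $\langle p, q\rangle = 0$; once this is done, (a) and (b) are immediate, and (c) follows from $\|x\|^2 = \|p+q\|^2 = \|p\|^2 + \|q\|^2$, noting that $\|q\| = \|x - P_K x\| = d_K(x)$ and $\|p\| = \|x - q\| = \|x - P_{K^\ominus} x\| = d_{K^\ominus}(x)$.

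The engine is the variational characterization \eqref{eq:varChar} of $p = P_K x$, namely $\langle q, k - p\rangle \le 0$ for all $k \in K$. First I would exploit that $K$ is a cone: since $0 \in K$ and $2p \in K$, substituting $k = 0$ and $k = 2p$ gives $\langle q, p\rangle \ge 0$ and $\langle q, p\rangle \le 0$ respectively, whence $\langle p, q\rangle = 0$, which is (b). Next, for any $k' \in K$, the element $p + k'$ lies in $K$ (since $K$ is a convex cone and contains both $p$ and $k'$), so substituting $k = p + k'$ yields $\langle q, k'\rangle \le 0$; hence $q \in K^\ominus$.

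It remains to verify that $q$ is actually the projection of $x$ onto $K^\ominus$. By \eqref{eq:varChar} applied to $K^\ominus$, I need to check that $\langle x - q, y - q\rangle \le 0$ for every $y \in K^\ominus$. But $x - q = p$, so
\[
 \langle p, y - q\rangle = \langle p, y\rangle - \langle p, q\rangle = \langle p, y\rangle \le 0,
\]
using $\langle p, q\rangle = 0$ from the previous step and $\langle p, y\rangle \le 0$ since $p \in K$ and $y \in K^\ominus$. This delivers $q = P_{K^\ominus} x$, hence (a), and then (c) follows by the Pythagorean calculation above.

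I do not anticipate a genuine obstacle: the whole argument is driven by the variational inequality \eqref{eq:varChar} and the cone structure $\mathbb{R}_+ K \subseteq K$. The only subtlety is remembering that a \emph{convex} cone is closed under addition of its elements (needed to justify $p + k' \in K$), which is what allows the single inequality from \eqref{eq:varChar} to force $q \in K^\ominus$ rather than merely give information tangential to $p$.
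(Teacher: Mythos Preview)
Your argument is correct and is the standard direct proof of Moreau's theorem via the variational characterization \eqref{eq:varChar}. The paper itself does not prove the result: it simply cites \cite[Th.~6.29]{BaC11} (with \cite{CR11} for extensions). So rather than taking a different route, you have supplied the proof that the paper omits; the cited proof in \cite{BaC11} follows essentially the same lines you outline.
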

\begin{proof}
 See, for example, \cite[Th.~6.29]{BaC11}. For extensions see \cite{CR11}.
\end{proof}

Let $X$ be a (real) linear space. Recall that a \emph{partially ordered linear space} is a pair $(X,K)$ where $K\subseteq X$ is a convex pointed cone and the ordering $\leq_K$ on $X$ induced by $K$ is
 $$x\leq_K y \iff y-x\in K.$$
In addition, if the ordering defines a lattice we say $(X,K)$ is a \emph{linear lattice}. In this case, the supremum (resp. infimum) of the doubleton $\{x,y\}\subseteq X$ is denoted by $x\vee y$ (resp. $x\wedge y$). The \emph{positive part}, \emph{negative part} and \emph{modulus} of a point $x\in\mathcal H$ are given by $x^+:=x\vee 0,x^-:=(-x)\vee 0$ and $|x|:=x\vee(-x)$, respectively. %Furthermore, $x=x^+-x^-$ and $|x|=x^++x^-$.

A \emph{normed lattice} is a linear lattice $(X,K)$ with a norm such that
 $$|x|\leq_K|y|\implies \|x\|\leq\|y\|.$$
A \emph{Banach lattice} is a complete normed lattice, and a \emph{Hilbert lattice} a Banach lattice in which the norm arises from an inner product. In a Hilbert lattice $(\mathcal  H,K)$ the cone $K$ is characterised by (see, for example, \cite[Th.~8]{BY84})
  \begin{equation}
    K=K^\oplus=(-K^\ominus)=\{x\in\mathcal H:\langle x,K\rangle\geq 0\}.\label{eq:HilbertCone}
  \end{equation}
Where there is no ambiguity, we will say that $X$ is a linear/Banach/Hilbert lattice (i.e., without reference to the cone) and denote the order cone by $X^+$.

\begin{fact}[Basic properties of linear lattices]
 Let $(X,K)$ be a (real) linear lattice and $x,y\in X$. Then
  \begin{enumerate}[(a)]
    \item $(x\vee y)+(x\wedge y)=x+y$.
    \item $(-x)\wedge(-y)=-(x\vee y)$.
    \item $(x+y)^+\leq_K x^++y^+$
    \item $x=x^+-x^-$ and $|x|=x^++x^-$
  \end{enumerate}
 Further, when $(X,K)$ is a Hilbert lattice, $P_K^+=x^+$.
\end{fact}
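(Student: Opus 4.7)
My plan is to prove (b) first from the fact that negation reverses the order, bootstrap (a) from (b) using translation invariance of the lattice operations, derive (d) from (a) and (b), and dispose of (c) by monotonicity; the Hilbert-lattice projection identity $P_K x = x^+$ will be handled separately using the variational characterisation \eqref{eq:varChar}.

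For (b), the map $z\mapsto -z$ is an order-reversing bijection since $a\leq_K b \iff b-a\in K \iff -b\leq_K -a$, and such maps interchange suprema and infima. For (a), I use that translation by any element is an order isomorphism, so $(u\wedge v)+z = (u+z)\wedge(v+z)$; combined with (b) this gives
$$(x+y)-(x\vee y) = (x+y)+\bigl((-x)\wedge(-y)\bigr) = y\wedge x,$$
which rearranges to (a). For (d), setting $y=0$ in (a) yields $x^+ + (x\wedge 0) = x$, while (b) supplies $x\wedge 0 = -((-x)\vee 0) = -x^-$, giving $x=x^+-x^-$; translating $|x|=x\vee(-x)$ by $-x$ and using positive homogeneity of $\vee$ produces $|x|-x = 0\vee(-2x) = 2x^-$, whence $|x| = x + 2x^- = x^+ + x^-$. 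For (c), since $x\leq_K x^+$, $y\leq_K y^+$ and $x^++y^+\geq_K 0$, the element $x^++y^+$ is an upper bound for both $x+y$ and $0$, hence dominates $(x+y)^+$.

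For the Hilbert-lattice identity $P_Kx = x^+$, I appeal to \eqref{eq:varChar}: since $x^+\in K$ and $x-x^+ = -x^-$, the requirement reduces to $\langle x^-, k-x^+\rangle \geq 0$ for every $k\in K$. The inequality $\langle x^-,k\rangle\geq 0$ is immediate from $K=K^\oplus$ in a Hilbert lattice (see \eqref{eq:HilbertCone}) together with $x^-\in K$, so the whole question boils down to $\langle x^+,x^-\rangle = 0$. I first establish the order-disjointness $x^+\wedge x^- = 0$ using the identity $a\wedge b = b + (a-b)\wedge 0$:
$$x^+\wedge x^- = x^- + (x^+-x^-)\wedge 0 = x^- + x\wedge 0 = x^- - x^- = 0,$$
then combine with (a) to obtain $|x^+-x^-| = x^+\vee x^- - x^+\wedge x^- = x^++x^-$. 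The Banach-lattice norm axiom forces $\|x^+-x^-\| = \|x^++x^-\|$, and expanding both squared norms in terms of the inner product delivers $\langle x^+,x^-\rangle = 0$, as required.

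The one substantive obstacle I anticipate is this last implication -- converting the purely order-theoretic disjointness $x^+\wedge x^- = 0$ into inner-product orthogonality $\langle x^+,x^-\rangle = 0$. This is the single point at which the Hilbert (rather than merely normed-lattice) structure is genuinely invoked; the algebraic identities (a)--(d) themselves are valid in any (respectively normed) linear lattice, so no Hilbert-specific tools beyond the norm--lattice compatibility and the parallelogram-style expansion of $\|\cdot\|^2$ enter the argument.
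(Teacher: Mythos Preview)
Your proof is correct and self-contained. The paper itself does not give a proof at all: it simply writes ``See, for example, \cite{P67} and \cite{BaB93}'' and moves on, treating the fact as standard background material. So there is nothing to compare at the level of argument---you have supplied what the paper outsources to the literature.

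A couple of brief remarks on your write-up. Your derivation of (a)--(d) is the standard textbook route (translation invariance plus order-reversal under negation), and matches what one finds in Peressini \cite{P67}. For the Hilbert-lattice identity $P_Kx=x^+$, your strategy---convert lattice disjointness $x^+\wedge x^-=0$ into the norm equality $\|x^+-x^-\|=\|x^++x^-\|$ via the normed-lattice axiom, then expand in the inner product to obtain $\langle x^+,x^-\rangle=0$---is exactly the argument in \cite{BaB93}. Your identification of this step as the one place where the Hilbert structure genuinely enters is accurate: the parallelogram expansion is what upgrades order-disjointness to orthogonality, and nothing weaker than an inner-product norm will do. So your ``substantive obstacle'' is not an obstacle at all; you have already dispatched it cleanly.
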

\begin{proof}See, for example, \cite{P67} and \cite{BaB93}. \end{proof}

\begin{remark}
 In texts on ordered topological vectors spaces, it is common (but not uniformly so) to define a ``cone" to be both pointed and convex, in addition to being closed under positive scalar multiplication.
\end{remark}

The following fact allows one to exploit the order structure induced by a closed convex pointed cone. We require and so state only the simplest reflexive results.

\begin{fact}[Normal cones in reflexive space]\label{fact:daniell}
 Let $X$ be a reflexive Banach space, and $K\subseteq X$ a closed convex pointed cone. The following are equivalent.
  \begin{enumerate}[(a)]
   \item If $(x_n)_{n=1}^\infty\subseteq X$ with $x_n\leq_K x_{n+1}$ and $\sup_n\|x_n\|<\infty$, then $(x_n)_{n=1}^\infty$ is norm convergent.
   \item If $(x_n)_{n=1}^\infty\subseteq X$ with $x_n\leq_K x_{n+1}$ and there exists $x\in X$ such that $x_n\leq_K x$, then $(x_n)_{n=1}^\infty$ is norm convergent.
   \item $K$ is (norm) normal.
  \end{enumerate}
\end{fact}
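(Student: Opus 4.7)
The plan is to prove the cyclic implications $(c)\Rightarrow(b)$, $(b)\Rightarrow(a)$, $(a)\Rightarrow(c)$. Two observations drive the argument throughout: since $K$ is a norm-closed convex set, it is weakly closed, so weak limits respect $\leq_K$; and by reflexivity every norm-bounded sequence admits a weakly convergent subsequence. Combined with monotonicity, these force every weak cluster point $x^*$ of a bounded increasing $(x_n)$ to satisfy $x_n \leq_K x^*$ for all $n$, and pointedness of $K$ then makes such a cluster point unique, so the entire sequence converges weakly to $x^*$.

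$(c)\Rightarrow(b)$: After translating so $x_1 = 0$, normality (equivalently, the existence of $c > 0$ with $0 \leq_K u \leq_K v \Rightarrow \|u\| \leq c\|v\|$) gives $\|x_n\| \leq c\|x\|$, and the observations above yield a unique weak limit $x^*$ with $x_n \leq_K x^*$. Set $u_n := x^* - x_n \in K$; this sequence is decreasing and $u_n \rightharpoonup 0$. Since $0 \leq_K x_m - x_n \leq_K u_n$ for $m \geq n$, normality gives $\|x_m - x_n\| \leq c\|u_n\|$, so $(x_n)$ is norm Cauchy as soon as $\|u_n\| \to 0$. The latter follows from Mazur's theorem: $0$ lies in the norm closure of $\conv\{u_k : k \geq n\}$, producing finite convex combinations $\sigma_n \to 0$ in norm with support in indices $k \geq n$. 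If $m_n$ denotes the largest index in the support of $\sigma_n$, decreasingness of $(u_k)$ gives $\sigma_n \geq_K u_{m_n}$, so normality yields $\|u_{m_n}\| \leq c\|\sigma_n\| \to 0$; combining with $\|u_n\| \leq c\|u_{m_n}\|$ for $n \geq m_n$ propagates this to $\|u_n\| \to 0$.

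$(b)\Rightarrow(a)$ is immediate: a norm-bounded increasing $(x_n)$ has, by reflexivity, a weak cluster point $x^*$ that is an upper bound; hence $(x_n)$ is order-bounded and (b) applies. For $(a)\Rightarrow(c)$, I argue by contrapositive. Non-normality of $K$ supplies sequences $(u_n),(v_n) \subseteq K$ with $u_n \leq_K v_n$, $\|u_n\| = 1$, and $\|v_n\| \leq 2^{-n}$, so that $V := \sum_n v_n$ converges absolutely in norm and the increasing sequence $U_N := \sum_{n=1}^N u_n$ satisfies $U_N \leq_K V$. If $(U_N)$ is norm-bounded, (a) forces $\|u_N\| \to 0$, contradicting $\|u_N\| = 1$. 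If instead $\|U_N\| \to \infty$, one passes to a subsequence $(U_{N_k})$ with $\|U_{N_k}\|$ growing geometrically and applies reflexivity and Mazur to the normalized sequence $U_{N_k}/\|U_{N_k}\|$ (which is dominated in order by $V/\|U_{N_k}\| \to 0$ and so converges weakly to $0$) to refine the construction into a norm-bounded increasing sequence with non-vanishing increments; this is the delicate part, and we defer to the systematic treatment in \cite[Ch.~2]{AT07}.

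The principal obstacle is the norm upgrade at the end of $(c)\Rightarrow(b)$---showing that a decreasing sequence in a normal cone with weak limit $0$ must converge to $0$ in norm---together with the analogous refinement required in the unbounded case of $(a)\Rightarrow(c)$. Both crucially combine Mazur's theorem (for locating norm-small convex combinations of tails) with the normality constant (for dominating individual sequence terms by those combinations). This is a form of order continuity of the norm that in this generality depends essentially on the interplay between reflexivity and normality.
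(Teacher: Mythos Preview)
The paper does not prove this fact at all: it simply cites \cite[Th.~2.45]{AT07}. Your proposal therefore goes well beyond what the paper does, and the portions you carry out---$(c)\Rightarrow(b)$ via Mazur's lemma and the normality constant, and $(b)\Rightarrow(a)$ via the observation that every weak cluster point of a bounded increasing sequence is an order upper bound---are essentially correct. One small wrinkle: in the last line of $(c)\Rightarrow(b)$ the phrase ``$\|u_n\|\leq c\|u_{m_n}\|$ for $n\geq m_n$'' is garbled, since $m_n\geq n$ by construction; what you need (and what the argument actually gives) is that for each fixed $N$ and all $j\geq m_N$ one has $0\leq_K u_j\leq_K u_{m_N}$, hence $\|u_j\|\leq c\|u_{m_N}\|$, so $\limsup_j\|u_j\|\leq c\|u_{m_N}\|\to 0$.

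The genuine gap is in $(a)\Rightarrow(c)$. You correctly observe that if the partial sums $U_N=\sum_{n\leq N}u_n$ stay bounded then (a) forces $\|u_N\|\to 0$, a contradiction. But without normality there is no reason for the order inequality $0\leq_K U_N\leq_K V$ to bound $\|U_N\|$, and you explicitly defer the unbounded case back to \cite{AT07}---which is exactly the reference the paper cites in lieu of a proof. So as written, your argument for this implication is circular relative to the paper: the only step that requires real work is the one you outsource. If you want a self-contained argument here, one standard route is to show first that in a reflexive space a closed pointed cone automatically has bounded order intervals (so $0\leq_K U_N\leq_K V$ \emph{does} force boundedness), which disposes of the unbounded case entirely; alternatively, the gliding-hump construction you sketch can be made precise, but it takes more care than your outline suggests.
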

\begin{proof}
 See, for example, \cite[Th.~2.45]{AT07}.
\end{proof}

\section{Sufficient Conditions for Norm Convergence}\label{sec:sufficient}

Suppose we have two sequences  $(\lambda_n)_{n=1}^\infty\subseteq\H$, and $(\kappa_n)_{n=1}^\infty\subseteq K\subseteq\H$, for some closed convex cone $K$. Given an initial point $x_0\in\H$, iteratively define the sequence $(x_n)_{n=1}^\infty$ by
\begin{equation}\label{eq:iterSch}
 x_{n}:=x_{n-1}-\kappa_{n}+Q\lambda_{n},
\end{equation}
where $Q:\H\to M$ is a linear mapping, and $M$ is a finite dimensional subspace of $\H$. Using the linearity of $Q$, \eqref{eq:iterSch} implies
\begin{equation}\label{eq:iterTele}
 x_{n}-x_0=-\sigma_{n}+Q\alpha_{n},
\end{equation}
where
 $$\sigma_{n}=\sum_{k=1}^{n}\kappa_k\in K,\quad \alpha_n:=\sum_{k=1}^{n}\lambda_n.$$

\begin{framed}
 Henceforth, unless explicitly stated otherwise, $(x_n)_{n=1}^\infty$ will denote a sequence of the form given in \eqref{eq:iterSch}.
\end{framed}

We now give two important examples, Examples \ref{ex:DR} and \ref{ex:VN}, of sequences satisfying the above assumptions. In both, we suppose that $S$ is a closed convex cone, and that $A$ is a closed affine subspace of finite codimension. Later, in Example~\ref{ex:relaxedDR}, we supply a unified extension.

In what follows, we denote by $Q$ the projection onto the (finite dimensional) orthogonal complement of the subspace parallel to $A$, so that (see Remark~\ref{remark:Q})
  $$P_Ax=x+Q(\overline x-x), \mbox{~for any~} \overline x\in A.$$

\begin{ex}[Douglas--Rachford sequences]\label{ex:DR}
 For any $x_0\in\H$ the \emph{Douglas--Rachford sequence} is defined by
  $$x_{n+1}:=T_{S,A}x_n\text{ where } T_{S,A}:=\frac{I+R_AR_S}{2},$$
 which, for any $\overline x_n\in A$, is expressible as
  \begin{align}
   x_{n+1} &=P_Sx_n+Q(\overline x_n- R_Sx_n) \notag{}\\
  &=x_n-P_{S^\ominus}x_n+Q(\overline x_n- R_Sx_n). \label{eq:DRiter}
  \end{align}
 So in this case, $K:=S^\ominus, \kappa_{n+1}=P_{S^\ominus}x_n$ and $\lambda_{n+1}=\overline x_n-R_{S}x_n$.
\end{ex}

\begin{ex}[von Neumann sequences]\label{ex:VN}
 For any $x_0\in\H$, the \emph{von Neumann sequence} is defined by
  $$x_{n+1}:=P_AP_Sx_n,$$
 which, for any $\overline x_n\in A$, is expressible as
 \begin{align*}
   x_{n+1}&=P_Sx_n+Q(\overline x_n-P_Sx_n),\\
          &= x_n-P_{S^\ominus}x_n+Q(\overline x_n-P_Sx_n).
 \end{align*}
 So here, again $K:=S^\ominus$ and $\kappa_{n+1}=P_{S^\ominus}x_n$ while $\lambda_{n+1}=\overline x_n-P_Sx_n$.
\end{ex}

\begin{remark}[Further properties]
 Whenever $S\cap A\neq\emptyset$, the Douglas--Rachford (resp. von Neumann) sequence converges weakly to a point in $\Fix T_{S,A}$ (resp. $S\cap A$), see, for example, \cite{BaC11}), and is \emph{Fej\'er monotone} with respect to $\Fix T_{S,A}$ (resp. $S\cap A$). Consequently, the sequence is bounded, and is norm convergent whenever it contains a norm convergent subsequence.
\end{remark}

 \begin{remark}[Computation of $Q$]\label{remark:Q}
 Let $\lambda\in\mathbb{R}^N$. As in \cite[Section~5]{BaB93}, define
 $$S :=  \H^+,\quad A := T^{-1}\lambda,$$
where $T:\H\to\mathbb R^N$ is a linear, continuous and given by $x\mapsto(\langle t_i,x\rangle)_{i=1}^N$ for given linearly independent vectors $t_i\in\H$.
Letting $Q:=T^\ast(TT^\ast)^{-1}$, we have as above
 $$P_Ax=x+Q(\overline x-x),\quad\text{for any }\overline x\in A.$$
Whence,
 $$R_Ax = x+2Q(\overline x-x),\quad R_Sx = 2x^+-x = |x|.$$\end{remark}

We give one further example, although many other variants are also possible. It includes both the von Neumann and Douglas--Rachford sequences as special cases.

\begin{ex}[Relaxed Douglas--Rachford sequences]\label{ex:relaxedDR}
For any $x_0\in\H$, consider the relaxation of the Douglas--Rachford sequence given by
 $$x_{n+1}:=T^c_{S,A}x_n$$
where
 $$T^c_{S,A} := c I + (1-c)R_A^b R_S^a,\quad
 R^a_K           := a I + (1-a)R_S,\quad
 R^b_A           := b I + (1-b)R_A,$$
for some $a,b\in{[0,1[}$ and $c\in{[0,1[}$.

That is, we replace each of $T_{S,A},R_S$ and $R_A$ in the Douglas--Rachford method, with a convex combination of itself and the identity.
When $a=b=0$ and $c=1/2$ we recover the Douglas-Rachford iteration, and when $a=b=1/2$ and $c=0$ we obtain the von Neumann iteration.

For any $\overline x_n\in A$, it is expressible as
\begin{align}
 x_{n+1}
 &= x_n+(1-c)\left(-x_n+R_A^b R_S^a x_n\right) \notag \\
 &= x_n+(1-c)\left(-x_n+b R_S^a x_n+(1-b)\left[2P_AR_S^a x_n-R_S^ax_n\right]\right) \notag \\
 &= x_n+(1-c)\left(-x_n+b R_S^a x_n-(1-b)R_S^ax_n+2(1-b)P_AR_S^a x_n\right) \notag \\
 &= x_n+(1-c)\left(-x_n+(2b-1) R_S^a x_n+2(1-b)\left[R_S^a x_n+Q(\overline x_n-R_S^a x_n)\right]\right) \notag \\
 &= x_n+(1-c)\left(-x_n+R_S^a x_n+2(1-b)Q(\overline x_n-R_S^a x_n)\right) \notag \\
 &= x_n+(1-c)\left(-x_n+a x_n+(1-a)R_S x_n+2(1-b)Q(\overline x_n-R_S^a x_n)\right) \notag \\
 &= x_n+(1-c)\left(-2(1-a)P_{S^\ominus} x_n+2(1-b)Q(\overline x_n-R_S^a x_n)\right) \notag \\
 &= x_n-2(1-a)(1-c)P_{S^\ominus} x_n+2(1-b)(1-c)Q(\overline x_n-R_S^a x_n). \label{eq:relaxDRiter}
\end{align}

That is, $K:=S^\ominus$,
 $$\kappa_{n+1}=2(1-a)(1-c)P_{S^\ominus}x_n,\quad \lambda_{n+1}=2(1-b)(1-c)(\overline x_n-R_S^a x_n).$$

Both $R_A^b$ and $R_S^a$ are \emph{averaged operators} (for a definition and more, see \cite{BaC11}). As the composition of averaged operators, $R_A^b R_S^a$ is averaged, and thus $T^c_{S,A}$ is averaged. Furthermore,
$\Fix T_{S,A}\neq\emptyset$ whenever $S\cap A\neq\emptyset$.
We may invoke \cite[Pr.~5.15]{BaC11} to see that the sequence $(x_n)_{n=1}^\infty$ is weakly convergent to a point in
$\Fix T^c_{S,A}$, and Fej\'er monotone w.r.t. $\Fix T^c_{S,A}$. In particular, the latter implies that the sequence is bounded, and is norm convergent whenever it contains a norm convergent subsequence.
\end{ex}

The following lemma gives some insight into what might cause the sequence $(x_n)_{n=1}^\infty$ to fail to converge in norm.

\begin{lemma}[Recession directions]\label{lem:cluster}
 Let $K\subseteq\H$ be a nonempty closed convex pointed norm normal cone.
 Suppose $(x_n)_{n=1}^\infty$ is bounded sequence of the form given in \eqref{eq:iterSch}.
 Then, either $(x_n)_{n=1}^\infty$ contains a norm convergent
 subsequence or the set of norm cluster points of $(Q\alpha_n/\|Q\alpha_n\|)_{\{n
:Q\alpha_n\neq 0\}}$ is nonempty and contained in $K$. In particular, the latter implies
$ R(Q)\cap K \neq \{0\}$.
\end{lemma}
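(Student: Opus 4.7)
The plan is to exploit the telescoped identity \eqref{eq:iterTele}, $x_n-x_0=-\sigma_n+Q\alpha_n$, in which $(\sigma_n)$ is $\leq_K$-monotone (since each $\kappa_k\in K$) and $Q\alpha_n$ lies in the finite-dimensional subspace $M$. I will argue by contradiction: assume $(x_n)$ admits no norm convergent subsequence and then show that every cluster point of $(Q\alpha_n/\|Q\alpha_n\|)$ lies in $K$, and that at least one such cluster point exists.

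The key preliminary observation is the following sub-claim: whenever $(Q\alpha_{n_k})$ is norm bounded along a subsequence, $(x_n)$ has a norm convergent subsequence. Indeed, finite-dimensionality of $M$ permits extraction of a further subsequence along which $Q\alpha_{n_k}\to v$. Rearranging the identity gives $\sigma_{n_k}=x_0-x_{n_k}+Q\alpha_{n_k}$, which is norm bounded since $(x_n)$ is bounded by hypothesis. Normality of $K$, applied to the sandwich $0\leq_K\sigma_n\leq_K\sigma_{n_k}$ (valid for $n\leq n_k$), upgrades this to norm boundedness of the whole sequence $(\sigma_n)$; Fact~\ref{fact:daniell}(a) then forces $(\sigma_n)$ to converge in norm. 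Combined with the convergent subsequence of $(Q\alpha_{n_k})$, this produces a convergent subsequence of $(x_n)$, which is the desired output.

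The sub-claim has two consequences under our standing assumption. First, $\|Q\alpha_n\|\to\infty$ along some subsequence; compactness of the unit sphere of $R(Q)\subseteq M$ then lets us pass to a subsequence along which $Q\alpha_{n_k}/\|Q\alpha_{n_k}\|\to u$ with $\|u\|=1$, exhibiting a cluster point. Second, for \emph{any} cluster point $v$ realized along a subsequence, the sub-claim forces $\|Q\alpha_{n_k}\|\to\infty$ along that subsequence. Dividing \eqref{eq:iterTele} by $\|Q\alpha_{n_k}\|$ makes the left-hand side vanish in the limit and hence
\[ \frac{\sigma_{n_k}}{\|Q\alpha_{n_k}\|}\;\longrightarrow\;v. \]
Each term on the left lies in the closed convex cone $K$, so $v\in K$. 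Since $u\in R(Q)\cap K$ and $\|u\|=1$, also $R(Q)\cap K\neq\{0\}$. The main subtlety is the two successive uses of normality inside the sub-claim---first to pass from subsequential to full norm boundedness of $(\sigma_n)$ via the cone sandwich, and then via Fact~\ref{fact:daniell}(a) to obtain norm convergence of $(\sigma_n)$.
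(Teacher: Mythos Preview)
Your proof is correct and follows essentially the same route as the paper: both split on whether $(Q\alpha_n)$ admits a bounded subsequence, use Fact~\ref{fact:daniell} to force norm convergence of the monotone $(\sigma_n)$ in the bounded case, and divide \eqref{eq:iterTele} by $\|Q\alpha_{n_k}\|$ to trap the cluster point in $K$ in the unbounded case. One small simplification: your detour---using normality on the sandwich $0\leq_K\sigma_n\leq_K\sigma_{n_k}$ to upgrade to boundedness of the \emph{full} sequence $(\sigma_n)$---is unnecessary, since the subsequence $(\sigma_{n_k})$ is itself $\leq_K$-increasing and bounded, so Fact~\ref{fact:daniell}(a) applies to it directly (this is what the paper does).
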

\begin{proof}
 Since $(x_n)_{n=1}^\infty$ is bounded, by \eqref{eq:iterTele}, we see that $(\sigma_n)_{n=1}^\infty$ is bounded if and only if
 $(Q\alpha_n)_{n=1}^\infty$ is bounded. We distinguish two cases: (i) $(Q\alpha_n)_{n=1}^\infty$ contains a bounded
 subsequence, say $(Q\alpha_{n_k})_{k=1}^\infty$, or (ii) no  subsequence of $(Q\alpha_n)_{n=1}^\infty$ is bounded.

 (i) In this case, by passing to a further subsequence if
 necessary, we may assume that $(Q\alpha_{n_k})_{k=1}^\infty$ converges weakly and hence in norm since it is contained within
 a finite dimensional subspace. Further, $(\sigma_{n_k})_{k=1}^\infty$ is bounded,
 and, along with $\sigma_n$ itself, increasing with respect to the partial order induced by $K$, so it converges in norm (by Fact~\ref{fact:daniell}).
 Equation \eqref{eq:iterTele} now implies  that $(x_{n_k})_{k=1}^\infty$ converges in
 norm.

(ii) Let $q_n:=Q\alpha_n/\|Q\alpha_n\|$ when $\|Q\alpha_n\|\neq 0$.
And, let $q$ be an arbitrary norm cluster point of $(q_n)_{\{n
:Q\alpha_n\not= 0\}}$, which exists because $(q_n)_{\{n
:Q\alpha_n\neq 0\}}$ is bounded and contained within a finite
dimensional subspace. Let $(q_{n_k})_{k=1}^\infty$ be a subsequence
convergent to $q$, which by passing to a further subsequence if
necessary, we may assume has $0 < \|Q\alpha_{n_k}\|\to+\infty$.
Then,
  $$\frac{x_{n_k}-x_0}{\|Q\alpha_{n_k}\|}=\frac{-\sigma_{n_k}}{\|Q\alpha_{n_k}\|}+q_{n_k}
  \implies q=\lim_{k\to\infty}\frac{\sigma_{n_k}}{\|Q\alpha_{n_k}\|}.$$
This completes the proof.
\end{proof}

\begin{remark}
 If $S\subseteq\H$ is a closed convex generating cone, then $S^\ominus$ is a closed convex pointed norm normal cone (see, for example, \cite[Cor.~2.43]{AT07}), so
 Lemma~\ref{lem:cluster} applies with $K:=S^\ominus$ and $K^\ominus=(S^\ominus)^\ominus=S$. Further if
 $(x_n)_{n=1}^\infty$ is any of the sequences from Examples~\ref{ex:DR}, \ref{ex:VN}, or \ref{ex:relaxedDR} and it admits a convergent subsequence, then as noted above, it perforce converges in norm.
\end{remark}

The following lemma shows that the sequence $(x_n)_{n=1}^\infty$ converges in norm under additional `compatibility' assumptions.

\begin{lemma}[Norm convergence]\label{lem:posProjNorm}
 Let $\H$ be a Hilbert lattice with lattice cone $S:=\H^+$, and $\kappa_{n+1}:=-x_n^-$.  Suppose $(\lambda_n)_{n=1}^\infty\subseteq\Lambda$
 for some set $\Lambda$ such that $Q(\Lambda)\subseteq S\cup(-S)$, and one of
  \begin{enumerate}[(a)]
   \item $Q\lambda_{n+1}\in S$ whenever $x_n\in S$,
   \item  If $x_{n_0}\in S$ for some $n_0$ then $(Q\lambda_n)_{n=1}^\infty$ is eventually zero,
  \end{enumerate}
 holds. Then $(x_n^+)_{n=1}^\infty$ converges in norm as soon as  $(x_n^+)_{n=1}^\infty$ remains bounded.
\end{lemma}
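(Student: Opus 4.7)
The plan is to first rewrite the recursion in a particularly clean form and then split the analysis according to whether some iterate ever lies in $S$. Substituting $\kappa_{n+1} = -x_n^-$ into \eqref{eq:iterSch} and using $x_n = x_n^+ - x_n^-$, the recursion collapses to
\begin{equation*}
 x_{n+1} = x_n^+ + Q\lambda_{n+1},
\end{equation*}
and since $Q(\Lambda) \subseteq S \cup (-S)$, each step either adds or subtracts an element of the lattice cone $S = \mathcal{H}^+$ to the current positive part.

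Case 1: there exists $n_0$ with $x_{n_0} \in S$. Under hypothesis (a), induction on $n \geq n_0$ gives $Q\lambda_{n+1} \in S$ and hence $x_{n+1} = x_n + Q\lambda_{n+1} \in S$; so $(x_n^+)_{n \geq n_0} = (x_n)_{n \geq n_0}$ is $\leq_S$-increasing and norm-bounded by assumption. Since $S$ is normal in the reflexive space $\mathcal{H}$, Fact~\ref{fact:daniell} delivers norm convergence. Under hypothesis (b), $(Q\lambda_n)$ is eventually zero, so $x_{n+1} = x_n^+$ for all large $n$, whence $x_{n+1}^+ = (x_n^+)^+ = x_n^+$; that is, $(x_n^+)$ is eventually constant.

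Case 2: $x_n \notin S$ for every $n$. Then necessarily $Q\lambda_{n+1} \in -S$ for every $n$, since otherwise $Q\lambda_{n+1} \in S$ would force $x_{n+1} = x_n^+ + Q\lambda_{n+1} \in S$, contradicting the case assumption. Writing $Q\lambda_{n+1} = -u_{n+1}$ with $u_{n+1} \in S$, one has $x_{n+1} \leq_S x_n^+$, and the monotonicity of $y \mapsto y^+$ (itself a consequence of $a \leq_S b \Rightarrow a \vee 0 \leq_S b \vee 0$ in any linear lattice) gives $x_{n+1}^+ \leq_S x_n^+$. Hence $(-x_n^+)$ is $\leq_S$-increasing and norm-bounded, and a second application of Fact~\ref{fact:daniell} yields norm convergence of $(x_n^+)$.

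The only conceptual subtlety is the dichotomy between Cases 1 and 2: the inclusion $Q(\Lambda) \subseteq S \cup (-S)$ forces each correction $Q\lambda_{n+1}$ to have a definite sign with respect to the order, while the compatibility hypotheses (a), (b) are tailored exactly to translate the event ``some iterate has entered $S$'' into either a monotone sequence (whose convergence follows from normality) or an eventually constant one. Apart from the standard lattice identity $a \leq_S b \Rightarrow a^+ \leq_S b^+$, no further calculation should be required.
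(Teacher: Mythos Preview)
Your proof is correct and follows essentially the same route as the paper's: rewrite the iteration as $x_{n+1}=x_n^++Q\lambda_{n+1}$, then split into a case where $(x_n^+)$ is $\leq_S$-decreasing (all corrections in $-S$) and a case where some iterate enters $S$, after which $(x_n^+)$ is either increasing (under (a)) or eventually constant (under (b)), with Fact~\ref{fact:daniell} supplying convergence in the monotone cases. The only cosmetic difference is that the paper phrases the dichotomy on the sign of $Q\lambda_{n_0}$ rather than on whether some $x_{n_0}\in S$, and uses subadditivity $(a+b)^+\leq a^++b^+$ in place of your monotonicity of $y\mapsto y^+$; these are equivalent formulations of the same argument.
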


\begin{proof}
 In this setting equation \eqref{eq:iterSch} becomes
  \begin{equation}\label{eq:l2}
    x_{n+1}=x_n^++Q\lambda_{n+1}.
  \end{equation}
 We consider the two possible cases: (i) $Q\lambda_n\in (-S)$ for all $n\geq 1$, or  (ii) $Q\lambda_{n_0}\in S$ for some $n_0\geq 1$.

 (i) For all $n\geq 1$,
  $$x_{n+1}^+=(x_n^++Q\lambda_{n+1})^+\leq x_n^++(Q\lambda_{n+1})^+=x_n^+.$$
  Since the sequence $(x_n^+)_{n=1}^\infty$ is bounded and decreasing, by Fact \ref{fact:daniell} (a) it converges in norm.

 (ii) By  \eqref{eq:l2}  $Q\lambda_{n_0}\in S$ implies that $x_{n_0}\in S$. So, if (a) holds we have $Q\lambda_{n_0+1}\in S$ and inductively $x_n$
 and $Q\lambda_n\in S$ for $n\geq n_0$. In which case, for $n\geq n_0$,
 $$x_{n+1}^+ =x_{n+1}=x_n^++Q\lambda_{n+1}\geq x_n^+.$$
 So, $(x_n^+)_{n=n_0}^\infty$  is increasing, and by assumption bounded, hence norm convergent by Fact \ref{fact:daniell}(a).

 On the other hand, if (b)  holds then there exists $k_0$ such that $Q\lambda_n=0$ for all $n\geq k_0$. This implies that  $(x_n)_{n=1}^\infty$ is positive and constant from $n=k_0-1$ onwards.  A fortiori, $(x_n^+)_{n=1}^\infty$ converges in norm.
\end{proof}

\begin{remark}
 Condition (b) of Lemma~\ref{lem:posProjNorm} is satisfied, for example,  by the von Neumann sequence of Example~\ref{ex:VN}, and under an additional assumption, by the Douglas--Rachford sequence of Example~\ref{ex:DR}
\end{remark}

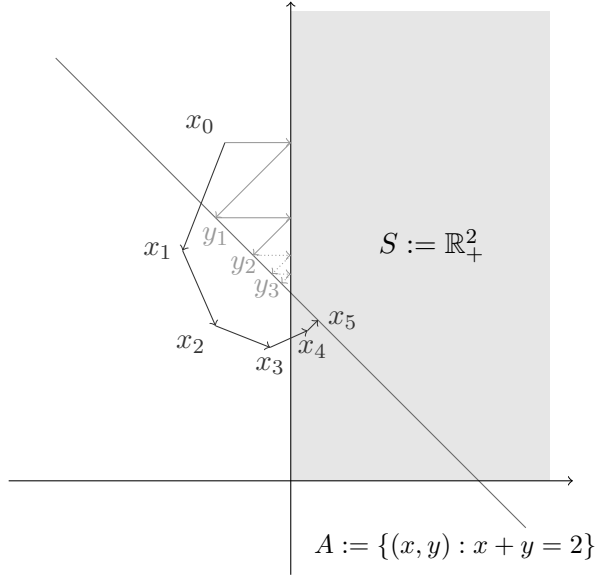
\begin{figure}
 \begin{center}\begin{tikzpicture}[scale=1.25]
  \fill[black!10] (0,0) -- (0,5) -- (2.75,5) -- (2.75,0);
  \draw[black!60] (-2.5,4.5) -- (2.5,-0.5);
  \draw (1.5,2.5) node {$S:=\mathbb R^2_+$};
  \draw (1.75,-0.7) node {\small $A:=\{(x,y):x+y=2\}$};
  \draw[->] (-3,0) -- (3,0);
  \draw[->] (0,-1) -- (0,5.1);
  %DR sequence
  \draw[->,black!80] (-0.7,3.6) node[above left] {$x_0$} -- (-1.15,2.45);
  \draw[->,black!80] (-1.15,2.45) node[left] {$x_1$} -- (-0.8,1.65);
  \draw[->,black!80] (-0.8,1.65) node[below left] {$x_2$} -- (-0.22,1.42);
  \draw[->,black!80] (-0.22,1.42) node[below] {$x_3$} -- (0.18,1.6);
  \draw[->,black!80] (0.18,1.6) node[below] {{ }$x_4$} -- (0.29,1.71) node[right] {$x_5$};
  %VN sequence
  \draw[->,black!40] (-0.7,3.6) -- (0,3.6);
  \draw[->,black!40] (0,3.6) -- (-0.8,2.8);
  \draw[->,black!40] (-0.8,2.8) node[below] {$y_1$} -- (0,2.8);
  \draw[->,black!40] (0,2.8) -- (-0.4,2.4);
  \draw[densely dotted,->,black!40] (-0.4,2.4)  -- (0,2.4);
  \draw[black!40] (-0.5,2.5) node[below] {$y_2$};
  \draw[densely dotted,->,black!40] (0,2.4) -- (-0.2,2.2);
  \draw[densely dotted,->,black!40] (-0.2,2.2)  -- (0,2.2);
  \draw[black!40] (-0.25,2.25) node[below] {$y_3$};
  \draw[densely dotted,->,black!40] (0,2.2) -- (-0.1,2.1);
 \end{tikzpicture}\end{center}
 \caption{A Douglas--Rachford sequence $(x_n)_{n=1}^\infty$ converges in five iterations, as described in Lemma~\ref{lem:DRpoint}. For the same initial point, the von Neumann sequence $(y_1)_{n=1}^\infty$ does not terminate finitely.}\label{fig:DRfinite}
\end{figure}

 Consider the sequence $(Q\lambda_n)_{n=1}^\infty$ in the von Neumann sequence of Example~\ref{ex:VN}.
 A useful observation of \cite{BaB93} is that as $x_n\in A$, one has
  $$Q\lambda_{n+1}=Q(x_n-P_Sx_n)=Q(P_{S^\ominus}x_n).$$
 Hence if $x_{n_0}\in S$ then $P_{S^\ominus}x_{n_0}=0$, so $Q\lambda_{n_0+1}=0$. Thus, inductively we see
 that condition (b) of Lemma~\ref{lem:posProjNorm} is satisfied provided $x_{n_0}\in S$ for some $n_0$.
 In which case the Von Neumann sequence is eventually constant.

 For the Douglas--Rachford sequence it is not as straightforward to select a point in $A$. Nevertheless, a similar argument can be performed, under an additional assumption, using the point given in the following lemma (see also, Figure~\ref{fig:DRfinite}).

\begin{lemma} \label{lem:DRpoint}
 Let $(x_n)_{n=1}^\infty$ be the Douglas--Rachford sequence defined by $x_{n+1}:=T_{S,A}x_n$ where $S\subseteq\H$
 is a nonempty closed convex cone, and $A\subseteq\H$ is a closed affine subspace with finite codimension.
 (i.e., $\kappa_{n+1}:=P_{S^\ominus}x_n$ and $\lambda_{n+1}:=\overline x_n-R_Sx_n$ where $\overline x_n\in A$). Then
  $$x_{n+1}-P_{S^\ominus}x_n\in A.$$
 Furthermore, if $Q(-S^\ominus)\subseteq S$ and $x_{n_0}\in S$, for some $n_0\geq 1$, then $Q\lambda_{k}=0$ for all $k\geq {n_0}+1$, and hence the Douglas--Rachford sequence is eventually constant.
\end{lemma}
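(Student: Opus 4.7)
The plan is to handle the two claims of the lemma in order. For the first claim, I directly expand $T_{S,A} x_n = \tfrac{1}{2}(x_n + R_A R_S x_n)$: applying $R_A y = 2P_A y - y$ yields $x_{n+1} = \tfrac{1}{2}(x_n - R_S x_n) + P_A R_S x_n$. Moreau decomposition (Theorem~\ref{th:moreau}) combined with $R_S x_n = 2P_S x_n - x_n$ gives $\tfrac{1}{2}(x_n - R_S x_n) = x_n - P_S x_n = P_{S^\ominus} x_n$, so $x_{n+1} = P_{S^\ominus} x_n + P_A R_S x_n$. This immediately implies $x_{n+1} - P_{S^\ominus} x_n = P_A R_S x_n \in A$.

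For the second claim, the strategy is: first show $x_{n_0+1} \in S \cap A$, then argue that any point of $S \cap A$ is a fixed point of $T_{S,A}$. Membership in $A$ follows from the first claim together with $P_{S^\ominus} x_{n_0} = 0$ (since $x_{n_0} \in S$). The crucial nontrivial step is membership in $S$. To this end, apply the decomposition from the first claim at index $n_0 - 1$ (legal because $n_0 \geq 1$): $x_{n_0} = s^\ominus + a$, where $s^\ominus := P_{S^\ominus} x_{n_0 - 1} \in S^\ominus$ and $a := P_A R_S x_{n_0 - 1} \in A$. Combined with the identity $P_A y - y = Q(a - y)$ (valid for any $y$ because $a \in A$) taken at $y = x_{n_0}$, this computes $P_A x_{n_0} - x_{n_0} = Q(a - (s^\ominus + a)) = Q(-s^\ominus)$, which lies in $S$ by the hypothesis $Q(-S^\ominus) \subseteq S$. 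Since $R_S x_{n_0} = x_{n_0}$ reduces $x_{n_0+1}$ to $P_A x_{n_0}$, the convexity of the cone $S$ yields $x_{n_0+1} = x_{n_0} + Q(-s^\ominus) \in S$.

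Having established $x_{n_0+1} \in S \cap A$, a direct check using the formula $x_{n+1} = P_{S^\ominus} x_n + P_A R_S x_n$ shows that any $x \in S \cap A$ satisfies $T_{S,A} x = 0 + P_A x = x$. Hence $x_k = x_{n_0+1}$ for all $k \geq n_0+1$, so the Douglas--Rachford sequence is eventually constant; moreover, for $k \geq n_0+2$ one has $x_{k-1} \in A$, forcing $Q\lambda_k = Q(\overline{x}_{k-1} - x_{k-1}) = P_A x_{k-1} - x_{k-1} = 0$.

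The main obstacle is the step showing $x_{n_0+1} \in S$: merely knowing $x_{n_0} \in S$ does not force $P_A x_{n_0} \in S$ in general, as $P_A$ need not preserve $S$. The hypothesis $n_0 \geq 1$ is essential because it guarantees $x_{n_0}$ is an actual iterate, so the first claim supplies the concrete decomposition $x_{n_0} = s^\ominus + a$ that turns $P_A x_{n_0} - x_{n_0}$ into $Q(-s^\ominus)$ and activates the hypothesis $Q(-S^\ominus) \subseteq S$.
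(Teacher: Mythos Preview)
Your argument is correct and follows essentially the same route as the paper. For the first claim the paper applies $Q$ to the iteration formula \eqref{eq:DRiter} and uses Moreau's theorem to obtain $Q(x_{n+1}-P_{S^\ominus}x_n)=Q\overline x_n$; your explicit identity $x_{n+1}=P_{S^\ominus}x_n+P_A R_S x_n$ is just a slightly more direct version of the same computation. For the second claim both proofs use the first part at index $n_0-1$ to produce the specific point of $A$ (your $a=P_A R_S x_{n_0-1}$, the paper's $\overline x_{n_0}=x_{n_0}-P_{S^\ominus}x_{n_0-1}$) that makes $P_A x_{n_0}-x_{n_0}=Q(-P_{S^\ominus}x_{n_0-1})$, invoke $Q(-S^\ominus)\subseteq S$ to land $x_{n_0+1}$ in $S\cap A$, and finish via $S\cap A\subseteq\Fix T_{S,A}$.

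One small remark: you conclude $Q\lambda_k=0$ only for $k\geq n_0+2$, whereas the lemma (and the paper's proof) assert it for $k\geq n_0+1$. Your range is in fact the correct one: at $k=n_0+1$ the argument (in both your proof and the paper's) yields only $Q\lambda_{n_0+1}=Q(-P_{S^\ominus}x_{n_0-1})\in S$, which need not vanish when $x_{n_0}\notin A$. This is a minor slip in the statement, and your version is the accurate one; the conclusion that the sequence is eventually constant is unaffected.
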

\begin{proof}
 Apply $Q$ to both sides of \eqref{eq:DRiter} and use Theorem~\ref{th:moreau} to obtain
  $$Q(x_{n+1}-P_{S^\ominus}x_n)=Q\overline x_n\text{ for }\overline x_n\in A.$$
 Suppose further that $Q(-S^\ominus)\subseteq S$ and $x_{n_0}\in S$ for some $n_0\geq 1$. Then $R_Sx_{n_0}=x_{n_0}$ and $x_{n_0+1}=P_Ax_{n_0}\in A$. Since $x_{n_0}-P_{S^\ominus}x_{n_0-1}\in A$, we have
  $$Q\lambda_{n_0+1}=Q((x_{n_0}-P_{S^\ominus}x_{n_0-1})-x_{n_0})=Q(-P_{S^\ominus}x_{n_0-1})\in S,$$
 and therefore $x_{n_0+1}=x_{n_0}+Q\lambda_{n_0+1}\in S$. That is, $x_{n_0+1}\in S\cap A\subseteq\Fix T_{S,A}$ and $Q\lambda_k=0$ for all $k\geq n_0+1$.   

\end{proof}

\begin{lemma}[Iteration for a hyperplane]\label{lem:DRcodim1}
 Let $\H$ be a Hilbert lattice with Hilbert cone $S:=\mathcal H^+$, $\kappa_{n+1}:=-x_n^-$, and $Q$ be the projection onto a $1$-dimensional subspace. If $(x_n)_{n=1}^\infty$ is bounded. and $(x_n)_{n=1}^\infty$ fails to converge in norm, then $Q(S)\subseteq S$ and $R(Q)\subseteq S\cup(-S)$.
\end{lemma}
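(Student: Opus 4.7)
The plan is to invoke Lemma~\ref{lem:cluster} and then exploit the one-dimensionality of $R(Q)$. Since $S=\mathcal H^+$ is a closed convex generating norm normal cone in the Hilbert lattice, its polar $K:=S^\ominus=-S$ (by \eqref{eq:HilbertCone}) is a closed convex pointed norm normal cone, and $\kappa_{n+1}=-x_n^-\in-S=K$, so the iteration satisfies the hypotheses of Lemma~\ref{lem:cluster}. Reading ``fails to converge in norm'' as precluding a norm convergent subsequence---which is automatic in the Fej\'er monotone setting of Examples~\ref{ex:DR}, \ref{ex:VN}, and \ref{ex:relaxedDR}---rules out the first alternative in the conclusion of Lemma~\ref{lem:cluster}. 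Hence the set of norm cluster points of $(Q\alpha_n/\|Q\alpha_n\|)_{\{n:Q\alpha_n\neq 0\}}$ is nonempty and contained in $K=-S$.

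Next I would use that $\dim R(Q)=1$. Fix a unit vector $e$ with $R(Q)=\mathbb R e$; then every nonzero $Q\alpha_n$ is a scalar multiple of $e$, so $Q\alpha_n/\|Q\alpha_n\|\in\{e,-e\}$. Therefore the only possible cluster points are $\pm e$, and at least one of them must lie in $-S$. Replacing $e$ by $-e$ if necessary, we may assume $e\in S$. Since $S$ is a cone, $\mathbb R_+ e\subseteq S$ and $\mathbb R_- e\subseteq -S$, which yields the second conclusion $R(Q)=\mathbb R e\subseteq S\cup(-S)$.

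Finally, to obtain $Q(S)\subseteq S$, I would apply the self-duality in \eqref{eq:HilbertCone}. For any $x\in S$,
$$Qx=\|e\|^{-2}\langle x,e\rangle e,$$
and because $x,e\in S$, relation \eqref{eq:HilbertCone} gives $\langle x,e\rangle\geq 0$. Hence $Qx$ is a nonnegative multiple of $e\in S$, so $Qx\in S$, which completes the proof. The main obstacle---really a subtle point rather than a technical difficulty---is the very first step: correctly invoking Lemma~\ref{lem:cluster} under the unstated, but natural, reading of ``fails to converge in norm'' as excluding subsequential limits. Once that is in hand, one-dimensionality of $R(Q)$ collapses the cluster-point data to a single line, and the self-duality of the Hilbert cone takes care of the rest.
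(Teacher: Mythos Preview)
Your proof is correct and follows essentially the same route as the paper's: write $Q=\langle a,\cdot\rangle a$ for a unit vector $a$, observe that the normalized sequence $(Q\alpha_n/\|Q\alpha_n\|)$ can only cluster at $\pm a$, apply Lemma~\ref{lem:cluster} to force one of these into $K=S^\ominus=-S$, and then use the self-duality of the Hilbert cone to deduce both $R(Q)\subseteq S\cup(-S)$ and $Q(S)\subseteq S$. You are in fact a bit more explicit than the paper in flagging that ``fails to converge in norm'' must be read as excluding any norm-convergent subsequence (justified by Fej\'er monotonicity in the applications), a point the paper leaves implicit.
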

\begin{proof}
 Since the range of $Q$ has dimension $1$, we may write $Q=\langle a,\cdot\rangle a$ for some $a$ with $\|a\|=1$. Since, for any $\alpha_n$ with $Q\alpha_n\not=0$,
  $$\frac{\langle a,\alpha_n\rangle a}{\|\langle a,\alpha_n\rangle a\|}=\frac{\langle a,\alpha_n\rangle a}{|\langle a,\alpha_n\rangle|}\in\{\pm a\},$$
we see that the only possible cluster points of $(Q\lambda_n/\|Q\lambda_n\|)_{n\in\{n\in\mathbb N:Q\lambda_n\neq0\}}$ are $\pm a$.
 Hence, by Lemma~\ref{lem:cluster}, if $(x_n)_{n=1}^\infty$ fails to converge in norm then $a\in S\cup(-S)$. Since $\H$ is a Hilbert lattice, it follows that $Q(S)\subseteq S$ and that, for any $x\in\H$, $Qx=\langle a,x\rangle a\in\mathbb{R}a\subseteq S\cup(-S)$.
\end{proof}

We now specialize our results to projection/reflection methods.

\begin{theorem}[Norm convergence of Douglas--Rachford sequences]\label{th:DRcvgt}
 Let $\mathcal H$ be a Hilbert lattice, $S:=\H^+$, let $A$ be a closed affine subspace with finite codimension, and suppose  $S\cap A\neq\emptyset$. For any $x_0\in\H$
 define $x_{n+1}:=T_{S,A}x_n$. Then $(x_n)_{n=1}^\infty$ converges in norm to a point $x$ with $x^+\in S\cap A$ whenever one of the following conditions holds:
  \begin{enumerate}[(a)]
   \item $R(Q) \cap  S=\{0\}$.
   \item $Q(A-S)\subseteq S\cup(-S)$ and $Q(S)\subseteq S$. 
   \item $A$ has codimension $1$.
  \end{enumerate}
\end{theorem}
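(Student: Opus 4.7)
The plan is to treat the three hypotheses separately, exploiting throughout that (i) the Douglas--Rachford sequence is bounded and Fej\'er monotone with respect to $\Fix T_{S,A}$, so any norm-convergent subsequence will upgrade to norm convergence of the whole sequence, and (ii) in a Hilbert lattice $K := S^\ominus = -S$ is closed, convex, pointed, and norm normal, with $P_{S^\ominus} x = -x^-$. For any $\bar x_n \in A$ the iteration \eqref{eq:DRiter} becomes $x_{n+1} = x_n^+ + Q(\bar x_n - |x_n|)$, matching \eqref{eq:iterSch} with $\kappa_{n+1} = -x_n^-$ and $\lambda_{n+1} = \bar x_n - |x_n|$; here $p := Q\bar x_n$ is independent of the choice of $\bar x_n \in A$. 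Identifying the limit will be automatic: if $x_n \to x$ in norm then $x \in \Fix T_{S,A}$, forcing $x^+ = P_S x \in S \cap A$.

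\emph{Case (a).} The hypothesis $R(Q) \cap S = \{0\}$ immediately yields $R(Q) \cap K = \{0\}$ (since $K = -S$ and $R(Q)$ is a linear subspace), so case (ii) of Lemma \ref{lem:cluster} is ruled out and Lemma \ref{lem:cluster} produces the required norm-convergent subsequence.

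\emph{Case (b).} Here $Q\lambda_{n+1} = p - Q|x_n| \in Q(A - S) \subseteq S \cup (-S)$ for every $n$. I would split into two subcases. If $x_{n_0} \in S$ for some $n_0$, the hypothesis $Q(S) \subseteq S$ reads $Q(-S^\ominus) \subseteq S$, so Lemma \ref{lem:DRpoint} will give that the sequence is eventually constant. Otherwise no $x_n$ lies in $S$, and I would then argue that $Q\lambda_{n+1} \in -S$ for every $n$: if instead $Q\lambda_{n+1} \in S$, then $x_{n+1} = x_n^+ + Q\lambda_{n+1} \in S + S = S$, a contradiction. Lemma \ref{lem:posProjNorm} (with its condition~(a) vacuously satisfied) will then deliver norm convergence of $(x_n^+)$ to some $\xi$. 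The hard part will be bridging from $(x_n^+)$ to the full sequence; here finite-dimensionality of $R(Q)$ is essential: $(Q|x_n|)$ is bounded in the finite-dimensional space $R(Q)$, so a subsequence $Q|x_{n_k}| \to q$ converges in norm, whence $x_{n_k+1} = x_{n_k}^+ + p - Q|x_{n_k}| \to \xi + p - q$ in norm, and Fej\'er monotonicity will promote this to norm convergence of the whole sequence.

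\emph{Case (c)} I would reduce directly to (b) via Lemma \ref{lem:DRcodim1}: if $(x_n)$ failed to converge in norm, that lemma would force $Q(S) \subseteq S$ and $R(Q) \subseteq S \cup (-S)$, hence $Q(A - S) \subseteq R(Q) \subseteq S \cup (-S)$, placing us in the setting of (b) and yielding norm convergence after all, contradicting the assumed failure. The principal difficulty I anticipate is isolated in the bridge step inside (b), where the full Douglas--Rachford iterate must be recovered from convergence only of its positive parts; every other ingredient is either a direct appeal to Lemma \ref{lem:cluster}, \ref{lem:posProjNorm}, \ref{lem:DRpoint}, or \ref{lem:DRcodim1}, or a cone-theoretic one-liner.
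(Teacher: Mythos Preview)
Your proposal is correct and follows the paper's route essentially step for step: Lemma~\ref{lem:cluster} for (a), Lemma~\ref{lem:posProjNorm} together with a finite-dimensional compactness argument and Fej\'er monotonicity for (b), and reduction to (b) via Lemma~\ref{lem:DRcodim1} for (c). The only cosmetic difference is in (b), where the paper selects $\bar x_n := x_n + x_{n-1}^-$ (supplied by Lemma~\ref{lem:DRpoint}) so that $\lambda_{n+1} = x_{n-1}^- - 2x_n^-$ and condition~(a) of Lemma~\ref{lem:posProjNorm} can be verified directly via $Q(S)\subseteq S$, rather than splitting into your two subcases.
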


\begin{proof} (a) Follows directly from Lemma \ref{lem:cluster}. (b) By the definition of the Douglas--Rachford sequence, we have $(\lambda_n)_{n=1}^\infty\subseteq \Lambda:=A-S$. By Lemma~\ref{lem:DRpoint}, we may express
  $$\lambda_{n+1}=(x_n+x_{n-1}^-)-|x_n|=x_{n-1}^--2x_n^-.$$
Thus if $x_{n}\in S$, for some $n\geq 1$, then $Q\lambda_{n+1}=Qx_{n-1}^-\in Q(S)\subseteq S$. We therefore have that Lemma~\ref{lem:posProjNorm}(a) holds, and thus that $(x_n^+)_{n=1}^\infty$ converges in norm.   Since $(x_n)_{n=1}^\infty$ is bounded (being weakly convergent) from \eqref{eq:DRiter}
 we see that $(Q\lambda_n)_{n=1}^\infty$ is also bounded. As it is contained in a finite dimensional subspace, it contains a
 norm convergent subsequence $(Q\lambda_{n_k})_{k=1}^\infty$. Again by \eqref{eq:DRiter} we see that $(x_{n_k})_{k=1}^\infty$ converges.
 Fej\'er monotonicity now implies norm convergence.
 (c) If $(x_n)_{n=1}^\infty$ fails to converge in norm, then Lemma~\ref{lem:DRcodim1} implies  $Q(S)\subseteq S$ and $Q(A-S)\subseteq S\cup(-S)$. But then (b) implies that $(x_n)_{n=1}^\infty$ was actually norm convergent, which is a contradiction.
\end{proof}

Within this framework, we  also recover the the corresponding results relating to von Neumann sequences originally derived in \cite{BaB93}.

\begin{theorem}[Norm convergence of von Neumann sequences]\label{th:VNcvgt}
 Let $\mathcal H$ be a Hilbert lattice, $S:=\H^+$, $A$ an affine subspace with finite codimension, and $S\cap A\neq\emptyset$. For any $x_0\in\H$ define $x_{n+1}:=P_AP_Sx_n$. Then $(x_n)_{n=1}^\infty$ converges in norm to a point $x\in A\cap S$ whenever one of the following conditions holds:
  \begin{enumerate}[(a)]
    \item $R(Q) \cap  S=\{0\}$.
   \item $Q(S)\subseteq S\cup(-S)$.
   \item $A$ has codimension $1$.
  \end{enumerate}
\end{theorem}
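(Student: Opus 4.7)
The approach mirrors Theorem~\ref{th:DRcvgt}, with simplifications afforded by the fact that the von Neumann iterates already lie in $A$ for $n\geq 1$. Throughout I would exploit that in a Hilbert lattice $S^\ominus=-S$ by \eqref{eq:HilbertCone}, so that the general-cone Lemmas~\ref{lem:cluster}--\ref{lem:DRcodim1} all apply with $K:=-S$. The sequence $(x_n)_{n=1}^\infty$ is Fej\'er monotone with respect to $A\cap S$ and hence bounded, so producing any norm convergent subsequence suffices to conclude norm convergence of the whole sequence. Choosing $\overline{x}_n:=x_n\in A$ for $n\geq 1$, the scheme \eqref{eq:iterSch} takes the form $x_{n+1}=x_n^++Q\lambda_{n+1}$ with $\kappa_{n+1}=P_{S^\ominus}x_n=-x_n^-$ and $\lambda_{n+1}=x_n-P_Sx_n=-x_n^-\in -S$.

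For case (a), I would apply Lemma~\ref{lem:cluster} directly. If $(x_n)_{n=1}^\infty$ had no norm convergent subsequence, the lemma would yield a unit-length norm cluster point of $(Q\alpha_n/\|Q\alpha_n\|)$ inside $R(Q)\cap(-S)$; negating this cluster point, which is legitimate since $R(Q)$ is a linear subspace, produces a nonzero element of $R(Q)\cap S$, contradicting (a).

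For case (b), I plan to invoke Lemma~\ref{lem:posProjNorm} with $\Lambda:=-S$. The hypothesis $Q(S)\subseteq S\cup(-S)$ immediately gives $Q(\Lambda)=-Q(S)\subseteq S\cup(-S)$. To verify condition (b) of that lemma, suppose $x_{n_0}\in S$ for some $n_0\geq 1$; then $x_{n_0}\in A\cap S$, so $x_{n_0+1}=P_AP_Sx_{n_0}=x_{n_0}$ and the sequence is constant thereafter, making $(Q\lambda_n)_{n=1}^\infty$ eventually zero. Lemma~\ref{lem:posProjNorm} then delivers norm convergence of $(x_n^+)_{n=1}^\infty$. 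To upgrade this to norm convergence of $(x_n)_{n=1}^\infty$, I would use that $(Q\lambda_n)_{n=1}^\infty$ is bounded in the finite dimensional subspace $R(Q)$, pass to a subsequence along which $Q\lambda_{n_k}$ converges in norm, and apply \eqref{eq:l2} to deduce that $(x_{n_k+1})$ is norm convergent; Fej\'er monotonicity then lifts this to the full sequence.

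Case (c) will then follow by contradiction: when $A$ has codimension one, $Q$ is a rank-one projection, so if $(x_n)_{n=1}^\infty$ failed to converge in norm then Lemma~\ref{lem:DRcodim1} would force $Q(S)\subseteq S\subseteq S\cup(-S)$, which is precisely condition (b), delivering norm convergence after all and giving the required contradiction. The only mildly technical step, shared with Theorem~\ref{th:DRcvgt}(b), is the finite dimensional compactness plus Fej\'er monotonicity lift from convergence of $(x_n^+)$ to convergence of $(x_n)$ in case (b); no analogue of the point-selection Lemma~\ref{lem:DRpoint} is needed here, since $x_n\in A$ is automatic for the von Neumann iteration.
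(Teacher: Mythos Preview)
Your proposal is correct and follows essentially the same route as the paper: Lemma~\ref{lem:cluster} for (a), Lemma~\ref{lem:posProjNorm}(b) for (b) via the observation that $x_n\in A$ makes the sequence constant once $x_{n_0}\in S$, and Lemma~\ref{lem:DRcodim1} to reduce (c) to (b). The only cosmetic differences are that the paper takes $\Lambda:=S$ rather than $-S$ (immaterial, since $Q(S)\subseteq S\cup(-S)$ iff $Q(-S)\subseteq S\cup(-S)$) and that you spell out the ``upgrade'' from convergence of $(x_n^+)$ to convergence of $(x_n)$ via finite-dimensional compactness and Fej\'er monotonicity, a step the paper leaves implicit here but writes out in the Douglas--Rachford case.
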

\begin{proof} (a) Follows directly from Lemma~\ref{lem:cluster}. (b) By the remarks preceding Lemma~\ref{lem:DRpoint}, we see that $(\lambda_n)_{n=1}^\infty\subseteq\Lambda:=S$, Lemma~\ref{lem:posProjNorm}(b) holds, and thus that $(x_n)_{n=1}^\infty$ converges in norm. (c) If $(x_n)_{n=1}^\infty$ fails to converge in norm, then Lemma~\ref{lem:DRcodim1} implies $Q(S)\subseteq S$. But then (b) implies that $(x_n)_{n=1}^\infty$ was actually norm convergent, which is a contradiction.
\end{proof}

The following lemma is an analogue of Lemma~\ref{lem:DRpoint} for the relaxed Douglas--Rachford sequences.

\begin{lemma}
 Let $(x_n)_{n=1}^\infty$ be the relaxed Douglas--Rachford sequence defined by $x_{n+1}:=T^c_{S,A}x_n$ where $S\subseteq\H$ is a
 nonempty closed convex cone, and $A\subseteq\H$ is an affine subspace with finite codimension. That is,
  $$\kappa_{n+1}=2(1-a)(1-c)P_{S^\ominus}x_n, \quad \lambda_{n+1}=2(1-b)(1-c)(\overline x_n-R_S^a x_n),$$ where $\overline x_n$ is some point selected from $A$.
   Then
   $$\frac{x_{n+1}+\left(\tau-1\right)x_n-(1-a)\tau P_{S^\ominus}x_n}{\tau}\in A,$$
   where $\tau:=2(1-b)(1-c)$.
 Further suppose that $(1-a)(1-c)=(1-b)(1-c)=1/2$ and $Q(-S^\ominus)\subseteq S$. If  $x_{n_0}\in S$, for some $n_0\geq 1$, then $Q\lambda_k=0$ for all $k\geq {n_0}+1$, and hence the relaxed Douglas--Rachford sequence is eventually constant.
\end{lemma}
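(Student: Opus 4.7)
The plan is to derive both assertions by applying the projection $Q$ to the iteration \eqref{eq:relaxDRiter}, exploiting the characterization $y\in A\iff Qy=Q\overline x$ for any $\overline x\in A$. For Part~1, I would first simplify $R_S^a$: since $R_S=2P_S-I$ and Theorem~\ref{th:moreau}(a) gives $P_Sx_n=x_n-P_{S^\ominus}x_n$, direct computation yields
\[
R_S^a x_n \;=\; x_n - 2(1-a)\,P_{S^\ominus}x_n.
\]
Substituting this into $\lambda_{n+1}$ and into the iteration, then projecting by $Q$ (using $Q^2=Q$), produces a linear identity among $Qx_{n+1}$, $Qx_n$, $QP_{S^\ominus}x_n$ and $Q\overline x_n$. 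Rearranging to isolate $\tau Q\overline x_n$ expresses it as $Q$ applied to the candidate numerator, and dividing by $\tau$ gives Part~1.

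For Part~2, I would begin by noting that the hypothesis $(1-a)(1-c)=(1-b)(1-c)=1/2$ forces $a=b$ and reduces the iteration to $x_{n+1} = x_n - P_{S^\ominus}x_n + Q(\overline x_n - R_S^a x_n)$. Using Part~1 at index $n_0$ together with $P_{S^\ominus}x_{n_0}=0$ (which follows from $x_{n_0}\in S$ via Theorem~\ref{th:moreau}) immediately shows $x_{n_0+1}\in A$. To obtain $x_{n_0+1}\in S$ I would mimic Lemma~\ref{lem:DRpoint}: apply Part~1 at index $n_0-1$ to select an explicit representative $\overline x_{n_0}\in A$ that differs from $x_{n_0}$ only by a multiple of $P_{S^\ominus}x_{n_0-1}\in S^\ominus$. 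Since $R_S^a x_{n_0}=x_{n_0}$, the expression $\lambda_{n_0+1} = \overline x_{n_0} - R_S^a x_{n_0}$ collapses to a negative scalar multiple of $P_{S^\ominus}x_{n_0-1}$, and the hypothesis $Q(-S^\ominus)\subseteq S$ then forces $Q\lambda_{n_0+1}\in S$. The iteration reduces to $x_{n_0+1} = x_{n_0} + Q\lambda_{n_0+1}\in S$, so $x_{n_0+1}\in S\cap A$.

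To finish, observe that every point of $S\cap A$ is fixed by both $R_S^a$ and $R_A^b$, hence by $T^c_{S,A}$, so the sequence is constant: $x_k=x_{n_0+1}$ for all $k\geq n_0+1$. Choosing the free representative $\overline x_k:=x_{n_0+1}\in A$ at these indices, $\lambda_k = x_{n_0+1} - R_S^a x_{n_0+1} = 0$ since $x_{n_0+1}\in S$, and consequently $Q\lambda_k=0$. The main technical obstacle is the bookkeeping in Part~1: one must carefully track $\mu:=2(1-a)(1-c)$ and $\tau=2(1-b)(1-c)$ and verify that the stated coefficient $-(1-a)\tau$ on $P_{S^\ominus}x_n$ is exactly what survives after the $Qx_n$ contributions cancel against $(\tau-1)Qx_n$. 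Once Part~1 is in hand, Part~2 is a parameter-dependent reprise of the Douglas--Rachford argument of Lemma~\ref{lem:DRpoint}.
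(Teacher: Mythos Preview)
Your proposal is correct and follows essentially the same route as the paper's proof. For Part~1 the paper simply says ``apply $Q$ to both sides of \eqref{eq:relaxDRiter} and use Theorem~\ref{th:moreau}'', which is precisely your plan of simplifying $R_S^a x_n = x_n - 2(1-a)P_{S^\ominus}x_n$ via Moreau and then reading off the membership in $A$ from the resulting identity in $Q$; for Part~2 the paper likewise specializes to $\tau=1$, uses the Part~1 point $x_{n_0}-(1-a)P_{S^\ominus}x_{n_0-1}\in A$ as the representative $\overline x_{n_0}$, and concludes $x_{n_0+1}\in S\cap A\subseteq \Fix T^c_{S,A}$ exactly as you outline (your justification of $Q\lambda_k=0$ for $k\ge n_0+1$ via the explicit choice $\overline x_k=x_{n_0+1}$ is in fact more careful than the paper's one-line assertion).
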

\begin{proof}
 The proof is similar to Lemma~\ref{lem:DRpoint}. To prove the first claim, apply $Q$ to both sides of \eqref{eq:relaxDRiter} use Theorem~\ref{th:moreau}.
 
 Suppose further that $(1-a)(1-c)=(1-b)(1-c)=1/2$ and $Q(-S^\ominus)\subseteq S$. In particular, we have $\tau=1$ so that 
  $$x_{n+1}=P_Sx_n+Q(\overline x_n-R^a_Sx_n),$$ 
  and the expression for the point in $A$ reduces to $x_{n+1}-(1-a)P_{S^\ominus}x_n\in A$.
  
If $x_{n_0}\in S$ then $R_Sx_{n_0}=x_{n_0}$, $x_{n_0+1}=P_Ax_{n_0}\in A$, and  
 $$Q\lambda_{n_0+1} = Q\left((x_{n_0}-(1-a)P_{S^\ominus}x_{n_0-1})-x_{n_0}\right)=Q(-(1-a)P_{S^\ominus}x_{n_0-1})\in S.$$
As before, this implies $x_{n_0+1}=x_{n_0}+Q\lambda_{n_0+1}\in S$. That is, $x_{n_0}+1\in S\cap A\subseteq\Fix T_{S,A}$ and so $Q\lambda_k=0$ for $k\geq n_0+1$.
\end{proof}

The following result simultaneously generalizes Theorem~\ref{th:DRcvgt} and  Theorem~\ref{th:VNcvgt} to a one-parameter family of relaxed Douglas--Rachford sequences. When $a=b=0$ and $c=1/2$ we recover the Douglas-Rachford iteration, and when $a=b=1/2$ and $c=0$ we obtain the von Neumann iteration.

\begin{theorem}[Norm convergence of relaxed Douglas--Rachford sequences]\label{th:relaxedDR}
  Let $\H$ be a Hilbert lattice, $S:=\H^+$, let $A$ be a closed affine subspace with finite codimension, and suppose $A\cap S\neq\emptyset$. For any $x_0\in H$, define $x_{n+1}:=T^c_{S,A}x_n$. Then $(x_n)_{n=1}^\infty$ converges in norm whenever one of the following conditions holds:
   \begin{enumerate}[(a)]
    \item $R(Q)\cap S=\{0\}$.
    \item $Q(A-S)\subseteq S\cup(-S)$, $Q(S)\subseteq S$, $(1-a)(1-c)=(1-b)(1-c)=1/2$  and $a\in [0,1/2]$.
    \item $A$ has codimension $1$, and $(1-a)(1-c)=(1-b)(1-c)=1/2$.
   \end{enumerate}
\end{theorem}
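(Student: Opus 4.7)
The plan is to imitate, item by item, the proofs of Theorems~\ref{th:DRcvgt} and~\ref{th:VNcvgt}, with the preceding lemma (the relaxed analogue of Lemma~\ref{lem:DRpoint}) playing the role of Lemma~\ref{lem:DRpoint}. Throughout, $(x_n)_{n=1}^\infty$ is Fej\'er monotone with respect to $\Fix T^c_{S,A}$ (as noted in Example~\ref{ex:relaxedDR}), so it is bounded and will converge in norm as soon as a norm convergent subsequence is exhibited.

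For (a), I would invoke Lemma~\ref{lem:cluster} directly with $K:=S^\ominus$, which is closed, convex, pointed and norm normal because $S=\H^+$ is generating. The dichotomy in that lemma produces either a norm convergent subsequence (sufficient by Fej\'er monotonicity) or a nonzero element of $R(Q)\cap S^\ominus$; since $R(Q)$ is a subspace and $S^\ominus=-S$, this is equivalent to $R(Q)\cap S\neq\{0\}$, contradicting the hypothesis.

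For (b), the strategy is to invoke Lemma~\ref{lem:posProjNorm} with $\Lambda:=A-S$. The parameter constraints $(1-a)(1-c)=(1-b)(1-c)=1/2$ make $\kappa_{n+1}=-x_n^-$ (as required) and collapse the iteration~\eqref{eq:relaxDRiter} to $x_{n+1}=P_Sx_n+Q(\overline x_n-R_S^ax_n)$. A short lattice computation gives $R_S^ax_n=x_n^++(1-2a)x_n^-$, so the assumption $a\in[0,1/2]$ ensures $R_S^ax_n\in S$ and hence $\lambda_{n+1}\in A-S=\Lambda$; then $Q(\Lambda)\subseteq S\cup(-S)$ by hypothesis. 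Taking $\overline x_n:=x_n+(1-a)x_{n-1}^-\in A$ as supplied by the preceding lemma, and simplifying via $x_n=x_n^+-x_n^-$ and $|x_n|=x_n^++x_n^-$, one obtains $\lambda_{n+1}=(1-a)(x_{n-1}^--2x_n^-)$. Consequently, if $x_n\in S$ then $x_n^-=0$ and $Q\lambda_{n+1}=(1-a)Qx_{n-1}^-\in Q(S)\subseteq S$, verifying Lemma~\ref{lem:posProjNorm}(a). Thus $(x_n^+)_{n=1}^\infty$ converges in norm. Because $(Q\lambda_n)$ is bounded and lies in a finite dimensional subspace, I extract a norm convergent subsequence $(Q\lambda_{n_k})$; the identity $x_{n_k+1}=x_{n_k}^++Q\lambda_{n_k+1}$ then yields a norm convergent subsequence of $(x_n)$, and Fej\'er monotonicity promotes this to full norm convergence.

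For (c), I would argue by contradiction. If $(x_n)$ fails to converge in norm, Lemma~\ref{lem:DRcodim1} forces $Q(S)\subseteq S$ and $R(Q)\subseteq S\cup(-S)$, so in particular $Q(A-S)\subseteq S\cup(-S)$. Moreover the constraint $(1-a)(1-c)=1/2$ together with $c\in[0,1[$ already forces $a\in[0,1/2]$, so every hypothesis of (b) is in place, contradicting the failure of norm convergence. The main obstacle I expect is the algebra in (b): pinning down the correct point $\overline x_n$ from the preceding lemma and carefully reducing $\lambda_{n+1}$ to a form that is visibly in $Q(S)$ whenever $x_n\in S$; the remainder is mostly bookkeeping against the templates already set in Theorems~\ref{th:DRcvgt} and~\ref{th:VNcvgt}.
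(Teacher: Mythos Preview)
Your proposal is correct and follows essentially the same route as the paper's proof: (a) via Lemma~\ref{lem:cluster}, (b) via Lemma~\ref{lem:posProjNorm}(a) after computing $R_S^ax_n=x_n^++(1-2a)x_n^-$ and using the point in $A$ supplied by the relaxed analogue of Lemma~\ref{lem:DRpoint}, and (c) by contradiction through Lemma~\ref{lem:DRcodim1}. Your algebra in (b) is in fact slightly cleaner than the paper's (you correctly use $\overline x_n=x_n+(1-a)x_{n-1}^-$ from the relaxed lemma, obtaining $\lambda_{n+1}=(1-a)(x_{n-1}^--2x_n^-)$, whereas the paper's displayed $\lambda_{n+1}$ corresponds to the unrelaxed choice), and your explicit check in (c) that $(1-a)(1-c)=1/2$ with $c\in[0,1[$ forces $a\in[0,1/2]$ fills a small gap the paper leaves implicit.
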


\begin{proof}
 (a) Follows immediately from Lemma~\ref{lem:cluster}. (b) Since $0\leq a\leq 1/2$ and
  $$R_S^ax_n=ax_n+(1-a)|x_n|=x_n^++(1-2a)x_n^-,$$
 we have $R_S^ax_n\in S$ and $\lambda_{n+1}\in \Lambda:=A-S$ for all $n\geq 1$.
 By Lemma~\ref{lem:DRpoint}, we express
  $$\lambda_{n+1}=(x_n+x_{n-1}^-)-(x_n^++(1-2a)x_n^-)=x_{n-1}^--2(1-a)x_n^-.$$
 Thus if $x_n\in S$, for some $n\geq 1$, then $Q\lambda_{n+1}=Qx_{n-1}^-\in Q(S)\subseteq S$. We therefore have that Lemma~\ref{lem:posProjNorm}(a) holds, and thus $(x_n^+)_{n=1}^\infty$ converges in norm. Arguing as before, since $(x_n)_{n=1}^\infty$ is bounded (being weakly convergent) from \eqref{eq:DRiter}
 we see that $(Q\lambda_n)_{n=1}^\infty$ is also bounded. As it is contained in a finite dimensional subspace, it contains a
 norm convergent subsequence $(Q\lambda_{n_k})_{k=1}^\infty$. Again by \eqref{eq:DRiter} we see that $(x_{n_k})_{k=1}^\infty$ converges.
 Fej\'er monotonicity now implies norm convergence.  (c) If $(x_n)_{n=1}^\infty$ fails to converge in norm, then Lemma~\ref{lem:DRcodim1} implies $Q(S)\subseteq S$ and $Q(A-S)\subseteq S\cup(-S)$. But then (b) implies that $(x_n)_{n=1}^\infty$ was actually norm convergent, which is a contradiction.
\end{proof}

\begin{remark}
 One may interpret the conditions (b) and (c) of Theorem~\ref{th:relaxedDR} as follows. If $c$ (resp. $a$ and $b$) is increased, then $a$ and $b$ (resp. $c$) must decrease.
\end{remark}

\begin{remark}[Inequality constraints]
 In a Hilbert lattice, suppose that the affine constraint $A$ is replaced with the half-space constraint
 $$A':=\{x\in\H:\langle a,x\rangle\leq b\}.$$
 That is, we consider the problem of finding a point in $A'\cap S$ where $S:=\H^+$.

 We reformulated $A'$ as an equality constrained problem in $\mathcal H\times\mathbb R$ by introducing a slack variable. That is, we have the sets
 $$\widehat{A} :=\{(x,y)\in\H\times\mathbb R:\langle a,x\rangle_\H+y=b\},\quad \widehat{S}:=\H^+\times\mathbb R_+.$$
 One may now consider the problem of finding a point in $\widehat A\cap\widehat S$.
\end{remark}

The following equivalence applies to case (a) of Theorems~\ref{th:DRcvgt} and \ref{th:VNcvgt}, and shows that its hypothesis coincides with \emph{bounded linear regularity} of $(S,A)$ (see Section~\ref{sec:rates} and \cite[Th.~5.3]{BaB93}), as we describe below in Corollary~\ref{cor:blr}.

\begin{theorem}
 Suppose $T:\H\to\H$ is a linear mapping with finite rank, and $K\subseteq\H$ a convex cone. Then
  $$N(T)+K=\H\iff N(T)^\perp\cap K^\ominus=\{0\}.$$
\end{theorem}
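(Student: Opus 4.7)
The plan is to reduce the equivalence to a purely finite-dimensional statement by projecting onto $V:=N(T)^\perp$, which is finite dimensional because $T$ has finite rank. Let $\pi:\H\to\H$ denote the orthogonal projection onto $V$, so that $\ker\pi=N(T)$ and $\pi(\H)=V$.

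The first step is to observe that $N(T)+K=\H$ if and only if $\pi(K)=V$. Applying $\pi$ to both sides gives the forward implication, using $\pi(N(T))=\{0\}$. Conversely, given $x\in\H$, if $\pi(x)=\pi(k)$ for some $k\in K$, then $x-k\in N(T)$, so $x\in N(T)+K$.

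The second step is to identify the polar of $\pi(K)$ computed inside $V$. Since $\pi$ is self-adjoint and fixes $V$, for every $v\in V$ and $k\in K$ we have $\langle v,\pi(k)\rangle=\langle\pi(v),k\rangle=\langle v,k\rangle$, whence
\[
 \bigl(\pi(K)\bigr)^\ominus_V \;=\; \{v\in V:\langle v,k\rangle\le 0 \text{ for all } k\in K\} \;=\; V\cap K^\ominus \;=\; N(T)^\perp\cap K^\ominus.
\]

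The last step is the finite-dimensional fact that a convex cone $C\subseteq V$ satisfies $C=V$ if and only if $C^\ominus_V=\{0\}$. Combined with the previous two reductions, this delivers the stated equivalence. The only subtlety I expect is in this final step: the cone $\pi(K)$ need not be closed, so the bipolar theorem yields only $\cl(\pi(K))=V$ from $\bigl(\pi(K)\bigr)^\ominus_V=\{0\}$. To upgrade this to $\pi(K)=V$, I would invoke the standard finite-dimensional fact that the relative interior of a convex set equals that of its closure; thus $\cl(\pi(K))=V$ forces the relative interior of $\pi(K)$ to be all of $V$, and hence $\pi(K)=V$.
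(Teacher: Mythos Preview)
Your proof is correct and follows essentially the same strategy as the paper: reduce to a finite-dimensional space and invoke the fact that a convex cone dense in a finite-dimensional space must be the whole space. The only cosmetic difference is that you project onto $V=N(T)^\perp$ via the orthogonal projection $\pi$, whereas the paper pushes forward via $T$ itself to $R(T)$ (and handles the forward implication by a direct inner-product argument rather than through the same reduction); since $T|_{N(T)^\perp}$ is an isomorphism onto $R(T)$, these are interchangeable, and your relative-interior justification for ``dense convex cone $=$ whole space'' is equivalent to the reference the paper cites.
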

\begin{proof}
 (``$\Longrightarrow$") Clearly, $0\in N(T)^\perp\cap K^\ominus$. Suppose there exists a non-zero $z\in N(T)^\perp\cap K^\ominus$. Then
  $$\langle z,\H\rangle=\langle z,N(T)\rangle+\langle z, K\rangle\leq 0.$$
  In particular, since $z\in\H$ we have $\|z\|\leq 0$, and hence $z=0$.

  (``$\Longleftarrow$") Suppose $N(T)^\perp\cap K^\ominus=\{0\}$. Then
   $$\overline{N(T)+K}=(N(T)+K)^{\ominus\ominus}=(N(T)^\perp\cap K^\ominus)^\ominus=\{0\}^\ominus=\H.$$
  Thus $N(T)+K$ is a convex cone which is norm dense in $\H$, and hence $T(K)$ is a convex cone which is norm dense in $R(T)$. Further since $T$ has finite rank, $R(T)$ is a Euclidean space. Since the only dense convex cone in a finite dimensional space is the entire space (see, for example, \cite[p.~269]{AB07}), $T(K)=R(T)$. Whence
   $$K+N(T)=T^{-1}T(K)=T^{-1}R(T)=\H.$$
 This completes the proof.
\end{proof}

\begin{corollary}\label{cor:blr}
 Let $\mathcal H$ be a Hilbert lattice with lattice cone $S:=\mathcal H^+$, and $Q$ be a projection onto a finite dimensional subspace. Then
 $$N(Q)+S=\mathcal H\iff R(Q)\cap S=\{0\}.$$
\end{corollary}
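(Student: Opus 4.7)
The plan is to derive the corollary as a direct application of the preceding theorem, taking $T := Q$ and $K := S$. Since $Q$ is a projection onto a finite dimensional subspace it has finite rank, so the theorem applies.

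First I would record two structural facts that convert the conclusion of the theorem into the form stated in the corollary. Because $Q$ is an \emph{orthogonal} projection, $N(Q)^\perp = R(Q)$, so the subspace appearing in the intersection can be rewritten as the range of $Q$. Second, because $\H$ is a Hilbert lattice with lattice cone $S = \H^+$, equation \eqref{eq:HilbertCone} yields $S^\ominus = -S$.

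Combining these, the theorem's right-hand side becomes
\[
 N(Q)^\perp \cap S^\ominus = R(Q) \cap (-S).
\]
Since $R(Q)$ is a linear subspace it is invariant under negation, hence $R(Q)\cap(-S)=\{0\}$ if and only if $R(Q)\cap S=\{0\}$. Together with the theorem, this gives the stated equivalence.

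There is no real obstacle here; the corollary is essentially a translation of the theorem into the particular geometric setting of the Hilbert lattice with $Q$ an orthogonal projection, using only the identification $N(Q)^\perp=R(Q)$ and the self-duality relation $S^\ominus = -S$ for the lattice cone.
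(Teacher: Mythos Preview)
Your proof is correct and follows essentially the same approach as the paper: apply the preceding theorem with $T:=Q$ and $K:=S$, then use $N(Q)^\perp=R(Q)$ and the self-duality $S^\ominus=-S$ together with the subspace property $-R(Q)=R(Q)$ to rewrite the right-hand side as $R(Q)\cap S=\{0\}$.
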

\begin{proof}
 Since $R(Q)$ it is a closed subspace, $-R(Q)=R(Q)=N(Q)^\perp$. Since $S$ is the Hilbert lattice cone, $S^\ominus=-S$. Altogether,
   $$N(Q)^\perp\cap S^\ominus=\{0\}\iff R(Q)\cap S=\{0\}.$$
 The result now follows from the previous Theorem.
\end{proof}

\section{Rate of Convergence}\label{sec:rates}
In this section we gives various examples and counter-examples regarding convergence rates of projection and reflection algorithms.

Recall that a pair $(A,B)$ of closed convex sets with nonempty intersection, are \emph{boundedly linearly regular} if for each bounded set $C\subseteq\H$, there exists $\kappa>0$ such that for all $x\in C$,
 $$\max\{d(x,A),d(x,B)\}\leq\kappa d(x,A\cap B).$$
For a pair of cones, $(A,B)$, the formally weaker notations of \emph{regularity} and \emph{linearly regularity}, as defined in \cite{BaB93}, coincide with bounded linear regularity (see \cite[Th.~3.17]{BaB93}).

The following example shows that even for a hyperplane, when the transversality condition
$N(Q)+S=\mathcal{H}$,
of Corollary~\ref{cor:blr}, fails, the alternating projection method need not have a uniform linear rate of convergence in any neighbourhood of the intersection.

\begin{ex}[Failure of (bounded) linear regular for the hyperplane]\label{ex:notLinReg}
 Consider the Hilbert lattice $\mathcal H:=\ell^2(\mathbb N)$ with lattice cone $\mathcal H^+ := \{x\in\mathcal H:x_k\geq 0\text{ for }k\in\mathbb N\}$, and the constraint sets
  $$A:=\{x\in\mathcal H:\langle a,x\rangle=0\},\quad S:=\mathcal H^+,$$
where $a\in \mathcal H^+$, $\|a\|=1$ and $a_m\in]0,1[$ for all $m\in
\mathbb N$.

For any initial point $x_0\in\mathcal H$, consider the \emph{von Neumann sequence} given by
 $$x_{n+1}:=P_AP_Sx_n=x_n^+-a\langle a,x_n^+\rangle.$$
Since $A\cap S=\{0\}$, and $A$ has codimension 1, Theorem~\ref{th:DRcvgt} implies that $x_n\to 0$ in norm.

For any fixed $m\in \mathbb N$, choose $\alpha_0>0$ and recursively
define
 $$\alpha_{n+1}:=(1-a_{m}^2)\alpha_n,\quad \beta_n:=\alpha_na_m.$$
 Let $x_0:=\alpha_0e_m-\beta_0 a=\alpha_0(e_{m}-a_{m}a)\in A$.

We show that the formulae $x_n=\alpha_n e_{m}-\beta_n a$ holds for all $n$. We proceed by induction on $n$. Observe that $$x_n^+=(\alpha_n-\beta_na_{m})e_{m}=\alpha_n(1-a_m^2)e_m=\alpha_{n+1}e_m.$$
Hence $\langle a,x_n^+\rangle=\alpha_{n+1}a_m=\beta_{n+1}$, and thus $x_{n+1}=\alpha_{n+1}e_m-\beta_{n+1}a$.
We have now shown that
 $$x_n=\alpha_0(1-a_m^2)^n(e_m-a_ma).$$

For each initial point (choice of $\alpha_0$ and $m$) we see that
the iterates converge linearly to $0\in S\cap A$. However, by
choosing $\alpha_0$ sufficiently small and $m$ large enough so
$1-a_m^2)$ is as near to $1$ as we please, we see that there is no
uniform linear rate of convergence over all initial points in any
neighborhood of the solution.
\end{ex}

By contrast, we now show the same problem is often solved by the Douglas--Rachford  method in finitely many steps. We note  in the Euclidean case that we always have finite convergence, as shown in Figure \ref{fig:DRfinite}, if the iteration converges to a point in the interior of $K$.

\begin{ex}[Douglas--Rachford sequences for Example~\ref{ex:notLinReg}]
 For $A$, $S$, $a$ and $x_0=\alpha_0(e_{m}-a_{m}a)$ as in Example~\ref{ex:notLinReg} we consider the Douglas--Rachford sequence
 $$x_{n+1}:=T_{S,A}x_n=x_n^+-a\langle a,|x_n|\rangle.$$
If $x_n$ has the form $x_n=\alpha_ne_m-\beta_na$ with $\alpha_n,\
\beta_n >0$ and  $\alpha_n-\beta_na_m>0$, then
$x_{n+1}=\alpha_{n+1}e_m-\beta_{n+1}a$ where
 \begin{equation}\label{eq:recurrence}
  \alpha_{n+1}:=\alpha_n-a_m\beta_n ,\quad \beta_{n+1}:=a_m\alpha_n+(1-2a_m^2)\beta_n .
 \end{equation}
Thus $\beta_{n+1}>0$ provided $m$ is chosen sufficiently large to
ensure $a_m^2 < 1/2$ and so $(\alpha_n)$ is strictly decreasing.
However, there is no guarantee that $\alpha_{n+1}$ remains positive.
Indeed, we show that $\alpha_{n_0} \leq 0$ for some (smallest)
$n_0\in \mathbb N$, in which case $x_{n_0+1} = x_{n_0+1}^+ = 0$ at
which point the Douglas-Rachford sequence terminates.

Suppose by way of a contradiction, that $\alpha_n>0$ for all $n$.
Note that this implies also that $\beta_n>0$ for all $n$.

Eliminating $(\beta_n)_{n=1}^\infty$ from \eqref{eq:recurrence} and
rearranging gives the two-term recurrence
\begin{equation}\label{eq:arec}\alpha_{n+2}=2(1-a_m^2)\alpha_{n+1}-(1-a_m^2)\alpha_n,\end{equation}
from which we deduce that the generating function for
$(\alpha_n)_{n=1}^\infty$,
 valid for $|z| \le 1$, is
  \begin{eqnarray}\label{eq:gf}
 g(z) := \sum_{n=0}^\infty\alpha_{n}\left(x\right)z^{n} = \frac {\alpha_0 + (\alpha_1- 2x\alpha_{0})z}{1-2\,zx+xz^2}=\alpha_0\frac {1-xz}{1-2\,zx+xz^2}\end{eqnarray}
where $x:= 1-a_m^2$, on noting that $\alpha_1=x\alpha_0$.
Hence
\begin{eqnarray}\label{eq:gdf}
 g'(1) = \sum_{n=1}^\infty n\alpha_{n}\left(x\right) =-\alpha_0{\frac {x \left(1-x\right) }{ \left( 1-x\right) ^{2}}}<0.
\end{eqnarray}
This shows that at least one $\alpha_n$ is strictly negative, a contradiction.
\end{ex}

\begin{remark} We may solve
\eqref{eq:arec} to show $\alpha_n = C(\sqrt{x})^n\cos(n\theta
+\phi)$ where $\theta :=\arccos\sqrt{x} \approx \pi/2-\sqrt{2(1-a_m)}s$ and so deduce that
$\alpha_n$ `typically' exhibits oscillatory behaviour around zero. That is, the solution is a superposition of scaled Chebyshev polynomials.
  We conclude that for  sufficiently large $m$, the iteration  always terminates  finitely.

   The following \emph{Maple 16} code
\begin{verbatim}
with(gfun):
    DR:=rectoproc({z(n+1)=2*x*z(n)-x*z(n-1),z(0)=alpha[0],z(1)=alpha[1]},z(n)):
    guessgf([seq((DR(m)),m=0..10)],z)[1]:latex(%);
\end{verbatim}
produces the requisite ordinary generating function
$$-{\frac { \left( 2\,x\alpha_{{0}}-\alpha_{{1}} \right) z-\alpha_{{0}}}
{x{z}^{2}-2\,zx+1}}
,$$
quite painlessly.
 \end{remark}

We have not yet exhibited an example of a Douglas-Rachford iteration
for a simplicial cone and an affine subspace that does not terminate
finitely. We now remedy the situation. To do so we start with a
useful technical result.

\begin{ex}[Condition for two Douglas--Rachford sequences to agree]\label{ex:conicide}
Let  $I \neq \emptyset$ be  finite, and let  $\{s_i\in\mathcal H:i\in I\}$ be  linearly independent unit vectors in  $\H$. Let  $S\subseteq\mathcal H$  denote the simplicial cone
 $$S:=\{x\in\mathcal H:x=\sum_{i\in I} \lambda_i s_i,\lambda_j\geq0,\forall j\in I\},$$
and set
 $$\widehat S:=S-S =\operatorname{span}\{s_i:i\in I\}=\{x\in\mathcal H:x=\sum_{i\in I} \lambda_i s_i,\lambda_j\in\mathbb R,\forall j\in I\}.$$

Consider any  affine subspace  $A$   not containing the origin such that $S\cap A\neq\emptyset$. Fix $x^\ast\in S\cap A$ and set
 $\epsilon:=\inf\{\lambda_j:x^\ast=\sum_{i\in I}\lambda_is_i,\lambda_j\neq 0,j\in I\}.$
This is well-defined because $0\not\in A$, and as $A$ is closed  $\epsilon$ is strictly positive.

We claim that
 \begin{equation}\label{eq:infDRex}
  B_\epsilon(x^\ast)\cap S=B_\epsilon(x^\ast)\cap\widehat S,
 \end{equation}
from which it follows that $P_S|_{B_\epsilon(x^\ast)}=P_{\widehat S}|_{B_\epsilon(x^\ast)}$.

To prove \eqref{eq:infDRex}, suppose there exists $x\in
B_\epsilon(x^\ast)\cap\widehat S$ but $x\not\in
B_\epsilon(x^\ast)\cap S$. If $x$ is represented as $x=\sum_{i\in
I}\alpha_i s_i$ then there must exists an index $j\in I$ such that
$\alpha_j<0$ (otherwise $x$ would be in $B_\epsilon(x^\ast)\cap S$).
Then
 $$\epsilon\geq \|x^\ast-x\|=\left\|\sum_{i\in I}(\lambda_i-\alpha_i)s_i\right\|\geq \lambda_j-\alpha_j>\lambda_j.$$
But this contradicts the definition of $\epsilon$, and the claim follows.

If $x\in B_\epsilon(x^\ast)$, nonexpansivity of all the following operators implies that $$P_Sx,P_Ax,R_Sx,R_Sx, T_{S,A}x\in B_\epsilon(x^\ast).$$
Since the projections onto $S$ and $\widehat S$ coincide within this ball, we have shown that when the initial point is chosen sufficiently close
to a point in $S\cap A$, the Douglas--Rachford iteration for the sets $S$ and $A$ coincides with  that for $\widehat S$ and $A$.
\end{ex}

\begin{ex}[Infinite Douglas--Rachford sequences]\label{ex:inf}

We begin with the case of two affine subspaces and using
Example~\ref{ex:conicide} show how this can encompas the case of an
affine subspace and a cone.

 \begin{enumerate}[(a)] \item In \cite[Sec.~2.3]{BaBPW13} it is observed that the Douglas-Rachford method applied to two lines $L_1$ and $L_2$
 making an angle strictly between $0$ and $\pi/2$ produces an infinite sequence which converges at a linear rate given by the cosine of the angle.
 Thus, typically for linear subspaces the method does not terminate finitely.

 \item
  Consider a finite dimensional subspace $\widehat S$ and a closed affine subspace $A$ of the form given in Example \ref{ex:conicide}, and
  an initial point which yields a Douglas--Rachford sequence $(x_n)_{n=1}^\infty$ converging in norm to the point
  $x^\ast=\sum_{i\in I}\alpha_is_i$ which does not terminate finitely. By replacing each $s_j$ with $-s_j$ if necessary, we may assume that
  $\alpha_i\geq0 $ for all $i\in I$. We then have that
   $$x^\ast\in A\cap S.$$
For sufficiently large $n$, the Douglas--Rachford sequence for the sets $A$ and $S$ and initial point $x_n$ coincide with
Douglas--Rachford sequence applied to the sets $A$ and $\widehat S$. In particular, we may start with two lines as in (a) : for
instance $A:=\{(x,1) \colon x \in \mathbb{R}\}$ and $S:=\{(x,x)\colon x  \ge 0 \}$.

\item [] If instead, the set $A$ contained the origin  one may satisfy
the above conditions by replacing $A$ (respectively, initial point)
by the set (respectively, point) obtained by translating by the
non-zero vector $\widehat s\in \widehat S\setminus A$.
\end{enumerate}
\end{ex}

\begin{remark} To our chagrin we have not yet found an example for the Hilbert cone and an affine subspace for which the  Douglas-Rachford
iteration does not terminate finitely. The cone $S$ above is a
lattice for the subspace it spans but this is not the whole space.
However, at least in infinite dimensions it seems likely such
sequences exist.
\end{remark}

\section{Conclusion}

Our analysis shows that issues about the strength and rate of convergence for relaxed Douglas--Rachford methods are indeed subtle.
We repeat that we are still unable to resolve case (c) of our main result, Theorem \ref{th:DRcvgt}, even for codimension 2 in the von Neumann case.

We hope, nonetheless, that we have set the foundation for a resolution of the following conjecture.

\begin{conj} Let $K$ be a Hilbert lattice cone and $A$  a finite
codimension closed affine manifold. Then, for $0 \le a,b,c <1$ the relaxed Douglas--Rachford
iteration  of Theorem \ref{th:relaxedDR} converges in norm
as soon as $$(1-a)=(1-b)=\frac{1}{2(1-c)}.$$ So, in particular, the
corresponding methods of von Neumann and Douglas-Rachford always
converge in norm.
\end{conj}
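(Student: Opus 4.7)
The plan is to adapt the argument of Theorem \ref{th:relaxedDR} from the codimension-1 case to arbitrary finite codimension. Under the hypothesis $(1-a)=(1-b)=\frac{1}{2(1-c)}$, one has $\tau := 2(1-b)(1-c) = 1$, so the analogue of Lemma \ref{lem:DRpoint} for the relaxed iteration holds in the clean form $x_{n+1}-(1-a)P_{S^\ominus}x_n\in A$. Moreover Example \ref{ex:relaxedDR} supplies weak convergence with Fej\'er monotonicity of $(x_n)_{n=1}^\infty$, so the sequence is bounded and it suffices to exhibit a norm convergent subsequence.

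First I would apply Lemma \ref{lem:cluster}. Either one extracts a norm convergent subsequence, in which case Fej\'er monotonicity closes the argument, or the normalized directions $Q\alpha_n/\|Q\alpha_n\|$ admit a cluster point $q\in R(Q)\cap S^\ominus$ with $\|q\|=1$. In a Hilbert lattice, \eqref{eq:HilbertCone} gives $S^\ominus=-S$, so $-q\in R(Q)\cap S\setminus\{0\}$; equivalently, by Corollary \ref{cor:blr}, the transversality condition $N(Q)+S=\mathcal H$ fails. This is precisely the obstruction isolated by Bauschke--Borwein in the codimension-1 case.

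Next I would induct on $\operatorname{codim}A$. The base case is Theorem \ref{th:relaxedDR}(c). For the inductive step, the cluster direction $q$ is to be used for a dimension reduction: orthogonally decompose $\mathcal H = \mathbb R q \oplus q^\perp$, split $Q$ as a rank-1 piece along $q$ plus a residual $Q'$ of rank $\operatorname{codim}A-1$, and project \eqref{eq:relaxDRiter} onto each summand. The $q$-component should absorb the entire unbounded growth of $(\sigma_n)_{n=1}^\infty$ and be monotone in the lattice order (because $-q\in S$ and $\sigma_n$ is increasing in $K=-S$), hence norm convergent by Fact \ref{fact:daniell}. The orthogonal residual should correspond to a relaxed Douglas--Rachford iteration on a Hilbert sublattice with affine constraint of strictly smaller codimension, to which the inductive hypothesis applies. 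Recombining the two components yields norm convergence of $(x_n)_{n=1}^\infty$, and Lemma \ref{lem:posProjNorm}(a), applied as in the proof of Theorem \ref{th:relaxedDR}(b), identifies the limit.

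The hard part will be making the dimension reduction rigorous, and this is precisely where the conjecture has resisted resolution even in codimension 2. The orthogonal splitting $\mathcal H = \mathbb R q \oplus q^\perp$ does not interact cleanly with the lattice operations $(\cdot)^+$ and $(\cdot)^-$ that drive the recursion: taking positive parts is not linear, so the lattice structure inherited on $q^\perp$ need not be compatible with the projected iteration. In codimension 1, rank-one-ness of $Q$ is a deus ex machina---it forces $R(Q)\subseteq S\cup(-S)$ outright (Lemma \ref{lem:DRcodim1}), aligning the subspace and lattice geometries. In higher codimension a single cluster direction $q\in -S$ does not force the whole of $R(Q)$ into $S\cup(-S)$; overcoming this will likely require a lattice-theoretic refinement of Lemma \ref{lem:cluster} that simultaneously locates all cluster directions inside a band of $\mathcal H$ order-disjoint from a transverse complement, combined with a renewed use of the parametric rigidity $(1-a)=(1-b)=1/(2(1-c))$ to iterate Lemma \ref{lem:posProjNorm}(a) down through the induction.
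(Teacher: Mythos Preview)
The statement you are attempting to prove is stated in the paper as an open \emph{conjecture}; the paper provides no proof. Indeed, the Conclusion explicitly says the authors ``are still unable to resolve case (c) of our main result, Theorem~\ref{th:DRcvgt}, even for codimension 2 in the von Neumann case,'' and then poses the conjecture as a target for future work. So there is no paper proof to compare against.

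Your proposal is therefore not a proof but a strategy sketch, and you yourself correctly flag the gap. The first two paragraphs are sound bookkeeping: the parametric constraint does give $\tau=1$ and the clean form of the affine membership; Lemma~\ref{lem:cluster} does reduce matters to the case where $R(Q)\cap S^\ominus\neq\{0\}$; and Corollary~\ref{cor:blr} does identify this with failure of transversality. These observations match exactly what the paper establishes en route to the codimension-$1$ results. The proposed inductive dimension reduction, however, is where the argument breaks down, and you diagnose the reason accurately: the orthogonal decomposition $\mathcal H=\mathbb Rq\oplus q^\perp$ is not a lattice decomposition, so the projected iteration on $q^\perp$ is not of the form \eqref{eq:iterSch} with $\kappa_{n+1}=-x_n^-$ computed in a genuine Hilbert sublattice. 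Without that, neither Lemma~\ref{lem:posProjNorm} nor Fact~\ref{fact:daniell} can be invoked on the residual. This is precisely the obstruction the paper leaves unresolved; your final paragraph is an honest acknowledgement that the induction step is wishful, not an argument. As written, the proposal does not close the conjecture.
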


\section*{Acknowledgements}
The authors wish to thank the helpful comments of the two anonymous referees.
JMB is supported, in part, by the Australia Research Council. BS is supported, in part, by the Australia Research Council. MKT is supported, in part, by an Australian Postgraduate Award.

\end{document}